\date{\today}
\def\W{{\mathcal W}}
\def\depth{\text{depth}\,}
\def\w{\wedge}
\def\dbar{\bar\partial}
\def\C{{\mathbb C}}
\def\w{{\wedge}}
\def\A{{\mathcal A}}
\def\B{{\mathbb B}}
\def\supp{\text{supp}\,}
\def\S{{\mathcal S}}
\def\F{{\mathcal F}}
\def\Pr{{\mathcal P}}
\def\K{{\mathcal K}}
\def\codim{{\rm codim\,}}
\def\K{{\mathcal K}}
\def\Ker{{\rm Ker\,  }}
\def\rank{{\rm rank\, }}
\def\Dom{{\rm Dom\,  }}
\def\E{{\mathcal E}}
\def\Ok{{\mathcal O}}
\def\L{{\mathcal L}}
\def\Re{{\rm Re\,  }}
\def\L{{\mathcal L}}
\def\depth{{\rm depth\,}}
\def\J{{\mathcal J}}
\def\nbh{neighborhood }
\def\PM{{\mathcal{PM}}}
\def\HM{{\mathcal{PM}}}
\def\be{\begin{equation}}
\def\ee{\end{equation}}
\newtheorem{thm}{Theorem}[section]
\newtheorem{lma}[thm]{Lemma}
\newtheorem{cor}[thm]{Corollary}
\newtheorem{prop}[thm]{Proposition}
\theoremstyle{definition}
\theoremstyle{remark}
\newtheorem{preremark}{Remark}
\newtheorem{preex}{Example}
\newenvironment{remark}{\begin{preremark}}{\qed\end{preremark}}
\newenvironment{ex}{\begin{preex}}{\qed\end{preex}}
\numberwithin{equation}{section}
\title[]{Weighted Koppelman formulas and 
the $\dbar$-equation  on  an analytic space}
\begin{document}

\date{\today}

\author{Mats Andersson \& H\aa kan Samuelsson}

\address{M. Andersson, Department of Mathematics\\Chalmers University of Technology and the University of 
Gothenburg\\S-412 96 G\"OTEBORG\\SWEDEN}
\address{H. Samuelsson, Matematisk Institutt, Universitetet i Oslo, Postboks 1053 Blindern, 0316 Oslo, Norway}

\email{matsa@chalmers.se, haakansa@math.uio.no}

\subjclass{32A26, 32A27, 32B15,  32C30}

\thanks{The first author was
  partially supported by the Swedish 
  Research Council; the second author was partially supported by
  a Post Doctoral Fellowship from the Swedish 
  Research Council.}

\begin{abstract}
Let $X$ be an analytic space of pure dimension.
We introduce  a formalism  to generate intrinsic weighted Koppelman formulas
on $X$ that provide  solutions to the $\dbar$-equation. 
We obtain new existence results
for the  $\dbar$-equation, 
as well as new proofs of various known results.
\end{abstract}

\maketitle

\section{Introduction}

Let $X$ be an analytic space  of pure dimension $n$
and let $\Ok=\Ok^X$ be the structure sheaf of (strongly)
holomorphic functions.
Locally  $X$ is a subvariety of
a domain  $\Omega$ in $\C^N$ and then 
$\Ok^X=\Ok^\Omega/\J$, where $\J$ is the sheaf in $\Omega$ 
of holomorphic functions that vanish on  $X$.
In the same way we say that $\phi$ is  a  smooth $(0,q)$-form on $X$,
$\phi\in\E_{0,q}(X)$, if given a local embedding,  there is 
a  smooth form in a \nbh in the ambient space 
such that $\phi$ is its pull-back to $X_{reg}$.
It is well-known that this defines an intrinsic sheaf
$\E_{0,q}^X$ on $X$. 
It was proved in \cite{HPo} that if 
$X$ is embedded as a reduced complete intersection  in a pseudoconvex domain
and $\phi$ is a $\dbar$-closed smooth form on  $X$, then 
there is a solution $\psi$ to $\dbar\psi=\phi$ on $X_{reg}$.
It was  an open question for long  whether this
holds  more generally, and it was proved only in \cite{AS2}\footnote{The proof in
\cite{AS2} first appeared in \cite{AS1}.}
that this   is indeed true for  any Stein  space $X$.


In \cite{AS2} we  introduced   fine  (modules
over the sheaf of smooth forms) sheaves  $\A_k$ of $(0,k)$-currents on $X$,
which coincide with the sheaves of smooth forms on $X_{reg}$ and have rather
``mild'' singularities at $X_{sing}$. The main result in \cite{AS2} is that
\begin{equation}\label{resolution}
0\to \Ok^X\to \A_0\stackrel{\dbar}{\to}\A_1\stackrel{\dbar}{\to}
\end{equation}
is a (fine) resolution of $\Ok^X$. By the de~Rham theorem it follows that
the classical Dolbeault isomorphism for a smooth $X$ extends to an arbitrary
(reduced) singular space, but with the sheaves $\A_k$ instead of $\E_{0,k}$.
In particular, if $X$ is Stein, $\phi\in\A_{q+1}(X)$ and $\dbar\phi=0$, then
there is $u\in\A_q(X)$ such that $\dbar u=\phi$. 

\smallskip

The results in \cite{AS2} are based on semiglobal 
Koppelman formulas on $X$ that we first 
describe for smooth forms.

\begin{thm}\label{main} Let $X$ be an analytic subvariety of pure dimension $n$ of
a pseudoconvex domain $\Omega\subset\C^N$ and assume that $\Omega'\subset\subset \Omega$
and $X':=X\cap\Omega'$.
There are linear operators $\K\colon\E_{0,q+1}(X)\to\E_{0,q}(X_{reg}')$
and $\Pr\colon \E_{0,0}(X)\to\Ok(\Omega')$ such that
\begin{equation}\label{koppelman}
\phi(z)=\dbar \K\phi(z)+\K(\dbar\phi)(z),\quad z\in X'_{reg},\ 
\phi\in\E_{0,q}(X),\ q\ge 1,
\end{equation}
and
\begin{equation}\label{koppelman2}
\phi(z)= \K(\dbar\phi)(z)+ \Pr\phi(z),\quad z\in X'_{reg},\  \phi\in\E_{0,0}(X).
\end{equation}
Moreover, there is a number $M$ such that 
\begin{equation}\label{asymp}
\K\phi(z)=\Ok(\delta(z)^{-M}),
\end{equation}
where $\delta(z)$ is the distance to $X'_{sing}$.
\end{thm}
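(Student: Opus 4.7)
I would construct $\K$ and $\Pr$ by restricting a weighted Koppelman operator in the ambient domain $\Omega\subset\C^N$ to $X'_{reg}$, following Berndtsson--Andersson but with the weight adapted to the ideal sheaf $\J$. Fix holomorphic generators $f_1,\ldots,f_m$ of $\J$ in a neighbourhood of $\overline{\Omega'}$. Pseudoconvexity of $\Omega$ provides Hefer $(1,0)$-forms $h_j(\zeta,z)$ on $\Omega\times\Omega$ with $\delta_{\zeta-z}h_j=f_j(\zeta)-f_j(z)$, where $\delta_{\zeta-z}$ denotes contraction by $\sum_k(\zeta_k-z_k)\partial/\partial\zeta_k$. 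Combining the $h_j$ with the Bochner--Martinelli section $s=\partial|\zeta-z|^2/|\zeta-z|^2$ and a Berndtsson--Andersson cut-off, one builds a kernel $g(\zeta,z)$, holomorphic in $z$ with compact support in $\zeta\in\overline{\Omega}$, satisfying $\nabla_{\zeta-z}g=1$ on $\Omega\times\Omega'$ with $\nabla=\delta_{\zeta-z}-\dbar$. The factors $f_j(z)$ and $h_j$ are inserted in prescribed positions in this exterior product, placing $\J$ explicitly inside the kernel. For any smooth extension $\tilde\phi$ to $\Omega$ of $\phi\in\E_{0,q}(X)$, the ambient Koppelman identity reads
\[
\tilde\phi(z)=\dbar_z\!\int_\zeta g\w\tilde\phi+\int_\zeta g\w\dbar\tilde\phi+\int_\zeta p\w\tilde\phi,\quad z\in\Omega',
\]
with a holomorphic projection $p$. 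I would then define $\K\phi$ and $\Pr\phi$ by restricting the first and last integrals to $z\in X'_{reg}$; identities \eqref{koppelman} and \eqref{koppelman2} then follow immediately from this ambient identity.

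The main task is to prove that $\K\phi$ and $\Pr\phi$ depend only on $\phi$, not on the choice of $\tilde\phi$. This is where the Hefer structure of $g$ does its work: any $\tilde\phi$ vanishing on $X_{reg}$ is locally a sum of terms $f_j\alpha$, $\bar f_j\alpha$, or $d\bar f_j\w\alpha$, and for each such summand one must check that the ambient integral vanishes on $X'_{reg}$. The $f_j$-piece is absorbed by rewriting $f_j(\zeta)=f_j(\zeta)-f_j(z)=\delta_{\zeta-z}h_j$ and integrating by parts against the Koppelman kernel; the anti-holomorphic pieces are controlled by a more delicate type argument against the Bochner--Martinelli singularity. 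Equivalently, the integral $\int_\zeta g\w\tilde\phi$ factors through a pairing of $\tilde\phi$ with a current built from $s$ and residue-type expressions in the $f_j$, which by construction only sees $\tilde\phi|_{X_{reg}}$.

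The hardest step will be the growth bound \eqref{asymp}. After intrinsification, the kernel of $\K$ on $X\times X'_{reg}$ is given by rational expressions in the $f_j$ and their Jacobian minors, multiplied by the Bochner--Martinelli factor. Its denominators vanish exactly on $X_{sing}$, but by the Lojasiewicz inequality each is bounded below by a fixed power of $\delta(z)$; combined with the standard Bochner--Martinelli estimate near the diagonal $\zeta=z$ and the compactness of $X\cap\overline{\Omega}$, this integrates to the polynomial blow-up $\Ok(\delta(z)^{-M})$. Making the whole scheme uniform requires a careful bookkeeping of the interaction between the Hefer weight and the diagonal singularity, but no essentially new ideas beyond standard Berndtsson--Andersson estimates combined with Lojasiewicz.
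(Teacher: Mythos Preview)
Your scheme is essentially the Henkin--Polyakov idea for a reduced complete intersection, and that is precisely the case the paper is trying to go \emph{beyond}.  The weight you describe---built from generators $f_1,\ldots,f_m$ of $\J$ and their one-step Hefer forms $h_j$---does not yield an intrinsic operator on a general $X$.  The paper's construction instead fixes a free resolution $0\to\Ok(E_M)\to\cdots\to\Ok(E_0)$ of $\Ok/\J$, builds the Andersson--Wulcan residue current $R=R_p+\cdots+R_M$ and a full system of Hefer morphisms $H^\ell_k$, and uses the weight $g^\lambda=f(z)HU^\lambda+HR^\lambda$.  For $z\in X_{reg}$ this reduces to $HR^\lambda$, and after $\lambda\to0$ the kernel involves $HR$.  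The intrinsic nature of $\K$ and $\Pr$ then follows \emph{automatically} from the representation $R\w dz=i_*\omega$ via the structure form $\omega$, which gives $R\w\Phi=0$ whenever $i^*\Phi=0$ on $X_{reg}$.  Your proposed substitute---writing a smooth $\tilde\phi$ vanishing on $X_{reg}$ as a sum of $f_j\alpha$, $\bar f_j\alpha$, $d\bar f_j\w\alpha$---is not correct beyond first order, and even granting it, the ``delicate type argument'' you allude to for the anti-holomorphic pieces has no reason to succeed without the resolution: the Bochner--Martinelli kernel alone does not annihilate $\bar f_j$ or $d\bar f_j$.

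Your sketch of the asymptotic estimate also misses the mechanism.  The paper does not bound rational denominators pointwise; rather it writes the kernel as $\omega(\zeta)\w\alpha(\zeta,z)/|\eta|^{2n}$ with $\alpha=\Ok(|\eta|)$, then splits the integral by a cutoff $\chi(\delta(\zeta)/\delta(z))$.  On $\{\delta(\zeta)\gtrsim\delta(z)\}$ one uses the almost semi-meromorphic bound $|\omega|=\Ok(\delta^{-M})$ directly; on the complementary region, where $|\eta|\gtrsim\delta(z)$, one cannot integrate pointwise at all and must instead invoke that $\omega$ (equivalently $R$) has \emph{finite order as a current}, estimating its action by finitely many derivatives of $(1-\chi)\alpha/|\eta|^{2n}$.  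Lojasiewicz enters only to replace $\delta$ by $|h|$ for a holomorphic tuple cutting out $X_{sing}$.
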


The operators are given as
\begin{equation}\label{kpv}
\K\phi(z)=\int_\zeta k(\zeta,z)\w \phi(\zeta),
\quad
\Pr\phi(z)=\int_\zeta p(\zeta,z)\w \phi(\zeta),
\end{equation}
where $k$ and $p$ are intrinsic integral kernels  
on $X\times X_{reg}'$ and $X\times \Omega'$,
respectively. They are locally integrable with respect
to $\zeta$ on $X_{reg}$ and the integrals in \eqref{kpv} 
are principal values at $X_{sing}$.
If $\phi$ vanishes in a neighborhood of a point $x$, then
$\K\phi$ is smooth at $x$. 
The  distance $\delta(z)$ is the one induced from the ambient space;
up to  a constant it is independent of the particular embedding.
The existence result in \cite{HPo} for a reduced complete intersection
is also obtained by an integral formula, which however 
does not give an intrinsic solution operator on $X$.

We cannot expect our solution $\K\phi$ to be smooth across $X_{sing}$,
see, e.g., Example~1  in \cite{AS2}.
However,  $\K$ and $\Pr$  extend to operators $\K\colon\A_{q+1}(X)\to \A_q(X')$
and $\Pr\colon\A_0(X)\to\Ok(\Omega')$,  and the Koppelman formulas still
hold, so in particular, $\dbar\K\phi=\phi$ if $\phi\in\A_{q+1}(X)$ and $\dbar\phi=0$
(Theorem~4 in \cite{AS2}).

\smallskip

There is an integer $L$, only depending on
$X$, such that for each $k\ge L$,
$\K\colon C^k_{0,q+1}(X)\to C^k_{0,q}(X_{reg}')$
and
$\Pr\colon C^k_{0,0}(X)\to\Ok(\Omega')$. Here $\phi\in C^k_{0,q}(X)$ means that $\phi$ is the 
pullback to $X_{reg}$ of a $(0,q)$-form of class $C^k$ in a \nbh of $X$ in the ambient space.
We have

\begin{thm}\label{maincor} Let $X,X',\Omega,\Omega'$ be as in the previous theorem.

\smallskip

\noindent (i)  \   If $\phi\in C^k_{0,q+1}(X)$, $q\ge 0$,
$k\ge L+1$,  and $\dbar\phi=0$, then there is 
$\psi\in C^k_{0,q}(X_{reg}')$ with
$\psi(z)=\Ok(\delta(z)^{-M})$ and $\dbar\psi=\phi$.

\smallskip

\noindent (ii)\  If $\phi\in\ C^{L+1}_{0,0}(X)$ and $\dbar\phi=0$ then
$\phi$ is strongly holomorphic.
\end{thm}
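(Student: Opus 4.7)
The plan is that both statements should fall out essentially immediately from the Koppelman formulas \eqref{koppelman} and \eqref{koppelman2} of Theorem~\ref{main}, together with the stated $C^k$ mapping properties of the operators $\K$ and $\Pr$. The only real content is to verify that those formulas, which Theorem~\ref{main} states for smooth forms, remain valid when $\phi$ is merely of class $C^k$ for $k$ large enough; given this, the two parts are one-line applications.

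For part (i), given $\phi\in C^k_{0,q+1}(X)$ with $k\ge L+1$ and $\dbar\phi=0$, I would simply set $\psi:=\K\phi$. Since $k\ge L$, the mapping property of $\K$ yields $\psi\in C^k_{0,q}(X'_{reg})$, and the decay $\psi(z)=\Ok(\delta(z)^{-M})$ is precisely \eqref{asymp}. The Koppelman identity \eqref{koppelman} then gives
\[
\phi(z)=\dbar\K\phi(z)+\K(\dbar\phi)(z)=\dbar\psi(z)
\]
on $X'_{reg}$, which is the desired equation. For part (ii), given $\phi\in C^{L+1}_{0,0}(X)$ with $\dbar\phi=0$, I would apply \eqref{koppelman2} to obtain
\[
\phi(z)=\K(\dbar\phi)(z)+\Pr\phi(z)=\Pr\phi(z),\qquad z\in X'_{reg}.
\]
By the mapping property, $\Pr\phi\in\Ok(\Omega')$, i.e., it is a genuinely holomorphic function in the ambient pseudoconvex domain $\Omega'$. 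Hence $\phi$ is, on $X'_{reg}$, the pullback of an element of $\Ok^{\Omega'}$, which is precisely the definition of being strongly holomorphic on $X'$; since the conclusion is local and $\Omega'\subset\subset\Omega$ is arbitrary, $\phi$ is strongly holomorphic on $X$.

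The main obstacle is thus to justify the extension of \eqref{koppelman} and \eqref{koppelman2} from $\E_{0,\bullet}(X)$ to $C^k_{0,\bullet}(X)$, and this is also where the gap between $L$ and $L+1$ enters. Indeed, $\K$ and $\Pr$ require an input of class at least $C^L$ to produce a $C^L$ output; but the right-hand sides of \eqref{koppelman} and \eqref{koppelman2} involve $\K(\dbar\phi)$, and $\dbar$ costs one derivative. Thus if $\phi\in C^k$ then $\dbar\phi\in C^{k-1}$, so we need $k-1\ge L$, i.e.\ $k\ge L+1$, in order for every term in the Koppelman formulas to make sense as a continuous form and for the identity to be valid pointwise on $X'_{reg}$. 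I would carry this out by approximating $\phi\in C^k$ by smooth forms $\phi_j$ on $X$ (via regularization in the ambient space followed by pullback), applying \eqref{koppelman}, \eqref{koppelman2} to each $\phi_j$, and passing to the limit using the continuity of $\K$ and $\Pr$ in the $C^k$-topology for $k\ge L+1$; this gives the formulas for the original $\phi$ and completes both parts.
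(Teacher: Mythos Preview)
Your argument is correct and the deductive structure---set $\psi=\K\phi$ for (i), use $\phi=\Pr\phi$ for (ii), and extend the Koppelman formulas to $C^k$ by approximation---is exactly what the paper does. You also correctly identify why $k\ge L+1$ rather than $k\ge L$ is needed.

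The one place where your account and the paper's differ is in what is taken as given. You treat the $C^k$ mapping property
\[
\K\colon C^k_{0,q+1}(X)\to C^k_{0,q}(X'_{reg}),\qquad k\ge L,
\]
as an established black box and only worry about passing the Koppelman identity to the limit. The paper, by contrast, regards the proof of Theorem~\ref{maincor} as the place where this mapping property is actually justified, and singles out the ``more delicate matter'' of showing that $\K\Phi$ depends only on the pullback $i^*\Phi$ to $X_{reg}$. Their argument writes $R$ locally as the pushforward under a modification $\pi\colon Y\to\Omega$ of a finite sum of elementary currents
\[
\tau_j=\dbar\frac{1}{t_{j_1}^{a_{j_1}}}\wedge\frac{\alpha_j}{t_{j_2}^{a_{j_2}}\cdots t_{j_r}^{a_{j_r}}},
\]
uses the SEP of $R$ to arrange that no divisor $\{t_{j_1}=0\}$ maps entirely into $X_{sing}$, and then takes $L$ to be the maximal total power in the denominators; this is what guarantees $\Phi\wedge R=0$ whenever $i^*\Phi=0$ and $\Phi\in C^{L+1}$. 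Your approximation argument is fine once this is in hand, but it does not by itself show that the limit $\K\phi$ is independent of the choice of ambient extension of $\phi$---that independence is precisely the content the paper supplies here.
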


Part (ii)  is well-known, \cite{Ma} and \cite{Sp}, but 
$\Pr\phi$ provides an explicit holomorphic extension of $\phi$ to $\Omega'$.

\smallskip

Our solution operator $\K$ behaves like a classical solution operator
on $X_{reg}$ and by  introducing appropriate weight factors  in the integral operators
we get

\begin{thm}\label{main2}
Let $X,X',\Omega,\Omega'$ be as in the previous theorem.
Given $\mu\ge 0$ there is  $\mu'\ge 0$ and a linear operator 
$\K$ such that if $\phi$ is a $\dbar$-closed $(0,q+1)$-form on $X_{reg}$,
$q\ge 0$,  with
$\delta^{-\mu'}\phi \in L^p(X_{reg})$, $1\leq p\leq \infty$, then
$\dbar\K\phi=\phi$ and $\delta^{-\mu}\K \phi \in L^p(X_{reg}')$.
\end{thm}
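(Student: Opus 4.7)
The plan is to construct $\K$ by inserting a weight factor into the integral operator of Theorem~\ref{main}, following the Andersson--Berndtsson weighted Koppelman formalism. The point is that \eqref{asymp} already gives a solution operator with at most polynomial blow-up $\delta(z)^{-M}$ near $X_{sing}$, and a weight that vanishes to high order along $X_{sing}$ will suppress this blow-up while at the same time making the kernel much more regular in the $\zeta$ variable, so that we can absorb a negative power $\delta^{-\mu'}$ of the source.

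\textbf{Construction of the weight.} Choose holomorphic functions $f_1,\ldots,f_r\in\Ok(\Omega)$ whose common zero set intersected with $X$ is precisely $X_{sing}$. By the \L ojasiewicz inequality there is $\alpha\ge 1$ with $|f(\zeta)|\gtrsim \delta(\zeta)^\alpha$ on $X$. For a large integer $N$ to be chosen, one builds a weight $g=g^N$ of the standard Andersson type, with $g_0(z,z)=1$ and $\nabla g=0$, whose components involve factors $f_j(\zeta)\,\overline{f_j(z)}$ or $f_j(\zeta)-f_j(z)$ to the power $N$; this produces in particular the pointwise bound $|g(\zeta,z)|\lesssim |f(\zeta)|^N(1+|f(z)|)^N$ near $X_{sing}$. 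Multiplying the kernels of Theorem~\ref{main} by $g$ gives new kernels $k^g$ and $p^g$ for which the Koppelman identities \eqref{koppelman}--\eqref{koppelman2} still hold on $X'_{reg}$; see the proof of Theorem~\ref{main} in \cite{AS2} and the weight formalism used there. For $q\ge 1$ the projection term drops out for degree reasons.

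\textbf{Kernel estimates.} Using the previous bound for $g$ together with \eqref{asymp} we obtain, for some constant $C$ independent of $N$,
\[
 |k^g(\zeta,z)|\lesssim \delta(\zeta)^{\alpha N}\,\delta(z)^{-M+CN}\,|k_0(\zeta,z)|,
\]
where $k_0$ is locally integrable on $X_{reg}$ in each variable separately. A Schur/Young-type test then shows that for any prescribed $\mu\ge 0$, choosing $N$ large enough and setting $\mu':=\alpha N-CN-M-\mu$ (up to additive constants), the operator $\K^g$ with kernel $k^g$ is bounded
$L^p(\delta^{-\mu'}dV)\to L^p(\delta^{-\mu}dV)$ on $X_{reg}$ uniformly in $p\in[1,\infty]$. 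Thus for any $\dbar$-closed $\phi$ with $\delta^{-\mu'}\phi\in L^p$ the form $\K^g\phi$ lies in $L^p(\delta^{-\mu}dV)$ on $X'_{reg}$, and the weighted Koppelman identity, combined with $\dbar\phi=0$ and the vanishing of the projection term, gives $\dbar\K^g\phi=\phi$ on $X'_{reg}$.

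\textbf{Main obstacle.} The principal difficulty is the quantitative tracking in Step~2: one must show that the cosmetic weight $g$ improves the $\delta(\zeta)$-decay of the kernel by roughly $\delta(\zeta)^{\alpha N}$ while introducing at most a factor $\delta(z)^{CN}$ in the other variable, so that, after paying the cost $\delta(z)^{-M}$ coming from \eqref{asymp} and the desired $\delta(z)^{-\mu}$ factor, the net exponent on the $z$-side stays non-negative as $N\to\infty$. This in turn requires a careful analysis of how $g$ interacts with the Hefer-type forms hidden in $k$ (so that the estimate is not destroyed when both $\zeta$ and $z$ tend to $X_{sing}$), and a precise use of the \L ojasiewicz comparison between $|f|$ and $\delta$ to convert the algebraic vanishing of $g$ along $\{f=0\}$ into the analytic estimate stated above. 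Once these exponents are under control, the $L^p$-boundedness for all $p$ follows uniformly by interpolation between the $L^1$ and $L^\infty$ Schur bounds.
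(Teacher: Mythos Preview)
Your strategy---insert an extra weight into the Koppelman operator to control behaviour at $X_{sing}$---is exactly the paper's, but the execution contains a genuine gap.

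\textbf{The weight cannot decay in $\zeta$.} You claim a weight with $|g(\zeta,z)|\lesssim |f(\zeta)|^N(1+|f(z)|)^N$. But any weight satisfies $g_{0,0}(z,z)=1$ on the diagonal; letting $\zeta=z\to X_{sing}$ shows such a bound is impossible. A weight that vanishes to order $N$ in $\zeta$ along $\{f=0\}$ must blow up like $|f(z)|^{-N}$, so your kernel estimate with the exponent $-M+CN$ on $\delta(z)$ (i.e.\ \emph{improved} behaviour in $z$) cannot hold. The paper instead uses the weight $g_a=\frac{a(z)\cdot\overline{a(\zeta)}}{|a(\zeta)|^2}+H^a\cdot\dbar\frac{\overline{a(\zeta)}}{|a(\zeta)|^2}$, which is holomorphic in $z$ and singular in $\zeta$; its $(0,0)$-part is $\lesssim |a(z)|/|a(\zeta)|$. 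Raising to the power $\mu+n$ and using that the $(1,1)$-component can appear at most $n$ times on the $n$-dimensional variety $X$, one gets a kernel with the factor $\big(a(z)\cdot\overline{a(\zeta)}/|a(\zeta)|^2\big)^\mu$. This gives the desired $|a(z)|^\mu$ decay of $\K\phi$---hence $\delta^{-\mu}\K\phi\in L^p$---at the price of extra $|a(\zeta)|^{-\mu}$ singularity in the kernel.

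\textbf{The role of $\mu'$ is not a Schur bound.} You obtain $\mu'$ from an $L^p$--$L^p$ kernel estimate. In the paper the constant $\mu'$ has a different origin: one first establishes the weighted Koppelman identity only for forms $\psi$ vanishing near $X_{sing}$ (where the singular weight $g_a$ causes no trouble and $L^p$-boundedness follows from the standard lemma on approximate convolutions), then applies it to $\psi=\chi_\epsilon\phi$ with $\chi_\epsilon=\chi(|a|^2/\epsilon)$. Since $\dbar(\chi_\epsilon\phi)=\dbar\chi_\epsilon\wedge\phi$, the obstruction to $\dbar\K\phi=\phi$ is $\int\dbar\chi_\epsilon\wedge k^\mu\wedge\phi$. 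Using $|\dbar\chi_\epsilon|\sim |a|^{-1}$, $|\omega|\lesssim\delta^{-M}$, the extra $|a(\zeta)|^{-\mu}$ from the weight, and the Lojasiewicz comparison $\delta^r\lesssim|a|\lesssim\delta$, this term tends to $0$ in $L^p$ precisely when $\delta^{-\mu'}\phi\in L^p$ for some $\mu'\ge M+r+\mu r$. Your sketch never validates the Koppelman identity for non-smooth $\phi$, and without this cutoff step the equality $\dbar\K\phi=\phi$ is unjustified.
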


The existence of such solutions was  proved  in \cite{FOV2} 
(even for $(r,q)$-forms) by resolutions of singularities
and cohomological methods (for $p=2$, but the same method surely gives
the more general results).
By a standard technique this theorem implies global
results for a Stein space $X$. 
In case $X_{sing}$ is a single point  more precise result
are  obtained in \cite{PS} and \cite{FOV1}. In particular, if $\phi$ has bidegree
$(0,q)$, $q<\dim X$, then the image of $L^2(X_{reg})$ under $\dbar$ 
has finite codimension in $L^2(X_{reg})$.
See also \cite{OV}, and the references given there,
for related results.
In \cite{FG}, Forn\ae ss and Gavosto show that, for complex curves,
a H\"{o}lder continuous solution exists if the right hand side  is bounded. 
Special  hypersurfaces and certain homogeneous varieties have  been considered, e.g., in \cite{RuppDiss}
and \cite{RZII}.

\smallskip

We can use our integral formulas to  
solve the  $\dbar$-equation  with compact support. 
As usual this leads to a Hartogs result in $X$,  and a
vanishing result in the complement of a Stein compact,
for forms with not too high degree. The  vanishing result is  well-known
but we can provide a description of the obstruction in the
``limit'' case.
For a given analytic space $X$, let
$\nu=\nu(X)$ be the minimal depth of the local rings $\Ok_{x}$
(the homological codimension).
Since $X$ has pure dimension, $\nu\ge 1$,  and
$X$ is Cohen-Macaulay if and only if $\nu=n$.

\begin{thm}\label{portensats}
Assume  that $X$ is a  connected Stein space of pure dimension $n$
with globally irreducible components $X^\ell$ 
and let $K$ be  a compact subset such that $X_{reg}^\ell\setminus K$ is connected
for each $\ell$.

\smallskip
\noindent (i)\quad If $\nu\ge 2$, then for each holomorphic
function $\phi\in\Ok(X\setminus K)$ there is
$\Phi\in\Ok(X)$ such that $\Phi=\phi$ in $X\setminus K$.

\smallskip
\noindent(ii)\quad Assume that  $\nu=1$ and let  $\chi$ be  a cutoff function
that is identically $1$ in a \nbh of $K$ and with support in a relatively
compact Stein space  $X'\subset\subset X$. 
There is an almost semi-meromorphic $\dbar$-closed $(n,n-1)$-current $\omega_{n-1}$  on $X'$ that
is smooth on $X'_{reg}$ such that the function
$\phi\in\Ok(X\setminus K)$ has a holomorphic extension $\Phi$
across $K$ if and only if
\begin{equation}\label{moment}
\int_X\dbar\chi\w \omega_{n-1} \phi h=0, \quad h\in\Ok(X).
\end{equation}
\end{thm}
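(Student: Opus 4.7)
The plan is to use the standard reduction of a Hartogs-type extension problem to a compactly supported $\dbar$-equation. Set $\tilde\phi:=(1-\chi)\phi$; since $\chi\equiv 1$ near $K$ this extends by zero across $K$ to a smooth function on $X$ agreeing with $\phi$ outside $\text{supp}\,\chi$. Then
\[
f:=\dbar\tilde\phi=-\dbar\chi\wedge\phi
\]
is a $\dbar$-closed smooth $(0,1)$-form on $X$ with compact support in $X'\setminus K$. If we can find $u\in\A_0(X)$ with $\dbar u=f$ that vanishes on an open subset of each $X^\ell_{reg}\setminus K$, then $\Phi:=\tilde\phi-u\in\Ok(X)$ agrees with $\phi$ on that open set, hence on all of $X^\ell_{reg}\setminus K$ by the connectedness assumption together with analytic continuation, and thus on all of $X\setminus K$. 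Both (i) and (ii) are therefore reduced to a statement about when $f$ lies in the image of $\dbar$ applied to compactly supported sections of $\A_0$.

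For (i), with $\nu\ge 2$, I would apply the weighted solution operator of Theorem \ref{main2} on a Stein neighborhood $X''$ of $\text{supp}\,\chi$ to obtain a first solution $u_0\in\A_0(X'')$; off $\text{supp}\,f$ this $u_0$ is a section of $\Ok^X$. The obstruction to modifying $u_0$ by a section of $\Ok(X)$ so that it vanishes on the outer component is a class in an $H^1_c$-type cohomology of $\Ok^X$, which by Serre--Grothendieck duality applied to the fine resolution \eqref{resolution} is dual to a space of $\dbar$-closed $(n,n-1)$-currents on $X''$ that are smooth on $X''_{reg}$. The hypothesis $\depth\Ok^X_x\ge 2$ kills the local cohomology $H^1_{\{x\}}(\Ok^X_x)$ at every $x$, and via \eqref{resolution} this pointwise vanishing propagates to a vanishing of the global obstruction in the Stein category. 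The compactly supported solution then exists.

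For (ii), with $\nu=1$, the obstruction space is nontrivial, and the content of the theorem is that it is $\Ok(X)$-generated by a single intrinsic current $\omega_{n-1}$. I would construct $\omega_{n-1}$ from residue currents of Coleff--Herrera / Andersson--Samuelsson type attached to the depth-one locus of $X$, and verify, using the calculus of \cite{AS2}, that it is independent of the embedding, $\dbar$-closed on $X'$, smooth on $X'_{reg}$, and almost semi-meromorphic. The Serre-type duality for \eqref{resolution} with compact supports then yields that $f$ lies in $\dbar$ of a compactly supported element of $\A_0(X')$ precisely when
\[
\int_{X'}f\wedge\omega_{n-1}\,h=0\qquad\text{for every }h\in\Ok(X),
\]
which, after inserting $f=-\dbar\chi\wedge\phi$, is exactly \eqref{moment}.

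The main obstacle, I expect, is the construction of $\omega_{n-1}$ and the proof that it $\Ok(X)$-generates the full obstruction space. Once that is in place, both the reduction to a compactly supported $\dbar$-problem and the duality against the resolution \eqref{resolution} follow along standard lines; what is nontrivial is identifying the correct intrinsic residue current in the non-Cohen--Macaulay setting and pinning down its $\Ok(X)$-module structure.
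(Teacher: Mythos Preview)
Your reduction to a compactly supported $\dbar$-problem is exactly the paper's first step: set $f=\dbar\chi\wedge\phi$ and look for $u$ with $\dbar u=f$ and support near $K$, so that $\Phi=(1-\chi)\phi+u$ is the extension; the connectedness of each $X^\ell_{reg}\setminus K$ then forces $\Phi=\phi$ there. From that point on, however, your route diverges sharply from the paper's.

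The paper does \emph{not} pass through Serre--Grothendieck duality or local cohomology at all. Instead it uses the Koppelman machinery of Section~\ref{koppsec} with the weight $g$ of Example~\ref{alba} taken with $z$ and $\zeta$ \emph{interchanged}. This weight has compact support in $z$, so the resulting $v(z)=\int (HR\wedge g\wedge B)_{N,N-1}\wedge f$ automatically has compact support. The only question is whether the projection term $\Pr f$ vanishes. Because this reversed $g$ is holomorphic in $\zeta$, one gets
\[
\Pr f(z)=\pm\,\dbar\tilde\chi(z)\wedge\int HR_{N-q-1}\wedge\sigma\wedge(\dbar\sigma)^q\wedge f,\qquad q=0,
\]
so $\Pr f$ involves only $R_{N-1}$. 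When $\nu\ge 2$ the resolution can be chosen of length $M=N-\nu\le N-2$, hence $R_{N-1}=0$ and $\Pr f\equiv 0$; this gives (i) with no appeal to duality. When $\nu=1$, $\Pr f$ vanishes iff $\int_X f\wedge h\,\omega_{n-1}=0$ for all $h\in\Ok(X')$, i.e.\ iff \eqref{moment} holds. Here $\omega_{n-1}$ is not something built ad hoc from the depth-one locus; it is simply the top component of the structure form $\omega$ of Section~\ref{res}, defined by $i_*\omega=R\wedge dz$ for the residue current $R$ of a free resolution of $\Ok^\Omega/\J$ of minimal length. Its almost semi-meromorphic, $\dbar$-closed, smooth-on-$X_{reg}$ nature is established in Section~\ref{res}.

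Your plan for (i) has a genuine gap: invoking Theorem~\ref{main2} yields an $L^p$ solution with weighted estimates, not one with compact support, and ``modifying $u_0$ by a section of $\Ok(X)$'' cannot produce compact support in general. The paper never uses Theorem~\ref{main2} here. For (ii), your duality picture is morally in the right direction, but the paper's point is precisely that the Koppelman formula makes the obstruction explicit and identifies it with integration against the structure form; no separate duality theorem is needed, and the ``$\Ok(X)$-generates the obstruction space'' issue dissolves because the obstruction \emph{is} $\Pr f$, which is literally of the form \eqref{moment}.
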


Part (i) is proved in \cite[Ch. 1 Corollary~4.4]{BS}.
If  $X$ is normal and $X\setminus K$ is connected, then the conditions
of Theorem~\ref{portensats} (i) are fulfilled. If $X$ is not normal it is necessary to assume that
$X^{\ell}_{reg}\setminus K$ is connected; see Example~\ref{hartogsex} in Section~\ref{compsupp} below.
See \cite{OV2} for a further discussion.
For related results proved by other methods see, e.g., \cite{MePo}, \cite{RCrelle}, and \cite{RuppHartog}.

The current  $\omega_{n-1}$ is the top degree component of a
{\it structure  form}  $\omega$ associated to $X$, see Section~\ref{res}.
Since  $\omega_{n-1}$ is {\it almost semi-meromorphic}, see Section~\ref{res}
and  \cite{AS2},
the integrals (the action of $\omega_{n-1}$ on  test forms)
exist as principal values at $X_{sing}$.
If the holomorphic extension 
$\Phi$ exists, then, since $\dbar\omega_{n-1}=0$, we have that 
$$
\int_X\dbar\chi\w \omega_{n-1} \phi h=
\int_X\dbar\chi\w \omega_{n-1} \Phi h=-\int_X \chi \dbar(\omega_{n-1} \Phi h)=0,
$$
and hence  condition \eqref{moment} is necessary;   see, e.g.,  \cite{AS2} for a
discussion on  currents on a singular space.

\smallskip
There is  a similar result for $\dbar$-closed forms (currents) in $\A$:  

\begin{thm}\label{nyporten}  
Let  $X$ be  a Stein space of pure dimension $n$ and let $K\subset X$ be a Stein compact. 
Assume  that   $\phi\in\A_q(X\setminus K)$ and 
$\dbar\phi=0$, and let $X'\subset\subset X$ be a Stein \nbh of $K$.

\smallskip

\noindent (i) If $q\le \nu-2$, then  there is $\Phi\in\A_q(X)$ such that
$\dbar\Phi=0$ and $\Phi=\phi$ outside $X'$.

\smallskip

\noindent (ii) If $q=\nu-1$, then there is such a $\Phi$ if and only if
\begin{equation}\label{moment2}
\int_X\dbar\chi\w \omega_{n-\nu} \w \phi h=0, \quad h\in\Ok(X).
\end{equation}
\end{thm}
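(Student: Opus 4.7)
The plan is to mimic the argument used for Theorem~\ref{portensats}: reduce to a compactly supported $\dbar$-equation on $X$, use $\K$ from Theorem~\ref{main} to solve it on a slightly larger Stein neighborhood, and then modify the solution to obtain compact support; the obstruction to the modification is controlled by the pairing with the appropriate component of the structure form.

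\emph{Reduction and necessity.} Choose a cutoff $\chi$ identically $1$ near $K$ with $\supp\chi$ compact in $X'$. Then $\phi_0:=(1-\chi)\phi$ extends by zero across $K$ to an element of $\A_q(X)$, and $g:=-\dbar\chi\wedge\phi=\dbar\phi_0\in\A_{q+1}(X)$ is $\dbar$-closed with compact support in $X'\setminus K$. Finding $\Phi$ as required is equivalent to finding $u\in\A_q(X)$ with compact support such that $\dbar u=g$; then $\Phi:=\phi_0-u$. For the necessity of \eqref{moment2} in (ii), assume such $u$ exists and $q=\nu-1$; then the $\dbar$-closedness of $\omega_{n-\nu}$ together with its almost semimeromorphic regularity justifies Stokes' theorem and gives
$$\int_X\dbar\chi\wedge\omega_{n-\nu}\wedge\phi h\;=\;\pm\int_X g\wedge\omega_{n-\nu}\wedge h\;=\;\pm\int_X\dbar(u\wedge\omega_{n-\nu}\wedge h)\;=\;0,$$
exactly as in the paragraph following Theorem~\ref{portensats}.

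\emph{Sufficiency.} Fix a Stein $X''$ with $\supp\chi\subset X'\subset\subset X''\subset\subset X$ and set $v:=\K g\in\A_q(X'')$, so that $\dbar v=g$ on $X''$ by the extension of Theorem~\ref{main} to $\A$-currents in \cite{AS2}. Then $v$ is $\dbar$-closed on the annular region $X''\setminus\overline{X'}$. If we can produce $w\in\A_{q-1}(X''\setminus\overline{X'})$ with $\dbar w=v$ there, then for any cutoff $\rho$ equal to $1$ near $\partial X''$ and $0$ near $\overline{X'}$ the form $u:=v-\dbar(\rho w)$ has compact support in $X''$ and still satisfies $\dbar u=g$. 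The whole problem therefore reduces to the cohomological question: when can $v$ be written as $\dbar w$ on the annulus $X''\setminus\overline{X'}$?

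\emph{Main obstacle.} Since $X''\setminus\overline{X'}$ is not Stein, Theorem~\ref{main} does not apply directly, and this identification of the annular obstruction is the heart of the proof. A \v{C}ech / Mayer--Vietoris argument on a Stein cover of the annulus, combined with the $\A$-resolution of $\Ok$ and the almost semimeromorphic structure form $\omega=\omega_0+\cdots+\omega_{n-\nu}$ from Section~\ref{res}, should identify the obstruction with
$$[g]\;\longmapsto\;\Bigl(h\mapsto \int_X g\wedge\omega_{n-q-1}\wedge h\Bigr)\in\Ok(X)^*,$$
a Serre-type duality adapted to singular spaces. Because $\omega_k$ is nontrivial only in the range $0\le k\le n-\nu$, this pairing is identically zero whenever $n-q-1>n-\nu$, i.e.\ whenever $q\le\nu-2$, and (i) follows unconditionally. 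When $q=\nu-1$, the pairing specializes to $h\mapsto\int_X g\wedge\omega_{n-\nu}\wedge h$, which coincides up to sign with the left-hand side of \eqref{moment2}, so its vanishing is both necessary (by the previous step) and sufficient, which gives (ii).
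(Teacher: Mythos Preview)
Your reduction to solving $\dbar u=g$ with compact support, and your necessity argument, are both fine and match the paper. The gap is in your ``sufficiency'' step. You produce a solution $v=\K g$ on $X''$ and then try to modify it to have compact support by solving $\dbar w=v$ on the annulus $X''\setminus\overline{X'}$; you then concede this is the ``main obstacle'' and gesture at a \v{C}ech/Mayer--Vietoris/Serre-duality argument you do not carry out. That is not a proof: solving $\dbar$ on a non-Stein annulus in a singular space, and identifying the obstruction precisely with the pairing against $\omega_{n-\nu}$, is at least as hard as the theorem itself.

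The paper avoids this entirely by a single trick you missed. In Section~\ref{compsupp} one takes the weight $g$ from Example~\ref{alba} with the roles of $z$ and $\zeta$ \emph{interchanged}. Then $g$ has compact support in $z$ (inside $X'$), and since $f=\dbar\chi\wedge\phi$ already has compact support in $\zeta$, the Koppelman formula still applies. The resulting
\[
v(z)=\int (HR\wedge g\wedge B)_{N,N-1}\wedge f
\]
is in $\A_q(X')$ and \emph{automatically} has support in a neighborhood of $\supp f$. Thus $v$ solves $\dbar v=f$ with compact support provided the projection term $\Pr f$ vanishes. Because the swapped weight is holomorphic in $\zeta$, $\Pr f$ reduces to \eqref{ulv}; it is zero for degree reasons when $q\le\nu-2$ (since then $R_{N-q-1}=0$), and for $q=\nu-1$ its vanishing is exactly the moment condition \eqref{moment2} via \eqref{alla}. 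Then $\Phi=(1-\chi)\phi+v$ does the job. No annular $\dbar$-problem, no duality, no \v{C}ech argument is needed.
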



As usual this leads to a vanishing theorem for $\dbar$ in $X\setminus K$.

\begin{cor}\label{nyportencor} Assume that $\phi\in\A_q(X\setminus K)$ and $\dbar\phi=0$.

\smallskip
\noindent
(i) If $1\le q\le\nu-2$,  then
there is $\psi\in\A_{q-1}(X\setminus K)$ such that $\dbar\psi=\phi$.

\smallskip
\noindent
(ii) If $1\le q=\nu-1$, then there is  $\psi\in\A_{q-1}(X\setminus K)$ such that 
$\dbar\psi=\phi$ if and only if
\eqref{moment2} holds.
\end{cor}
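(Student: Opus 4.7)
\emph{Plan.} The strategy is to combine Theorem~\ref{nyporten} with the global $\dbar$-solvability on the Stein space $X$ furnished by the fine resolution \eqref{resolution} of \cite{AS2}.

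The necessity of \eqref{moment2} in case~(ii) is a direct Stokes computation of the type exhibited after Theorem~\ref{portensats}. Given $\psi\in\A_{q-1}(X\setminus K)$ with $\dbar\psi=\phi$, substitute $\phi=\dbar\psi$ into the left-hand side of \eqref{moment2}: using $\dbar\omega_{n-\nu}=0$, $\dbar h=0$, and $\dbar^2=0$, the form $\omega_{n-\nu}\wedge\dbar\psi\wedge h$ is $\dbar$-closed on $X\setminus K$, so the integrand equals $\dbar(\chi\,\omega_{n-\nu}\wedge\dbar\psi\wedge h)$, which integrates to $0$ since $\chi$ has compact support and $\omega_{n-\nu}$ is almost semi-meromorphic.

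For sufficiency in both cases, apply Theorem~\ref{nyporten} to produce a $\dbar$-closed $\Phi\in\A_q(X)$ extending $\phi$ from $X\setminus X'$, where $X'\subset\subset X$ is a chosen Stein neighborhood of $K$; in case~(i) this is automatic, in case~(ii) it uses the assumed \eqref{moment2}. Since $X$ is Stein, the fine resolution \eqref{resolution} together with Cartan's Theorem~B gives $H^j(\A_\bullet(X),\dbar)=H^j(X,\Ok^X)=0$ for every $j\ge 1$, so there is $u\in\A_{q-1}(X)$ with $\dbar u=\Phi$. In particular, $\dbar u=\phi$ already on $X\setminus\overline{X'}$.

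It remains to modify $u$ on the pre-compact region $\overline{X'}\setminus K$ so that $\dbar\psi=\phi$ holds on all of $X\setminus K$. Writing $\psi=u-c$, one needs $c\in\A_{q-1}(X\setminus K)$ satisfying $\dbar c=\Phi-\phi$; the right-hand side is $\dbar$-closed on $X\setminus K$ and vanishes outside $X'$. I would construct $c$ by exhausting $K$ by a nested sequence of Stein neighborhoods $X'\supset X_1'\supset X_2'\supset\cdots$, inductively applying Theorem~\ref{nyporten} together with the Koppelman operator from Theorem~\ref{main2} on each Stein space $X_k'$, and gluing the partial solutions on the annular overlaps $X_k'\setminus\overline{X_{k+1}'}$ by a smooth partition of unity. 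The principal difficulty is this last step: ensuring that the iterative patching converges to a genuine element of $\A_{q-1}(X\setminus K)$ without accumulating any cohomological obstruction beyond \eqref{moment2}. This is precisely where the depth hypothesis $q\le\nu-1$ enters decisively, corresponding through \eqref{resolution} to the vanishing, or \eqref{moment2}-characterization, of the local-cohomology group $H^{q+1}_K(X,\Ok^X)$.
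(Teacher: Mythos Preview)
Your overall strategy—Theorem~\ref{nyporten} plus global exactness of \eqref{resolution} on the Stein space $X$, followed by a limiting argument over shrinking Stein neighborhoods of $K$—is exactly right, and your necessity argument for (ii) is fine. But the sufficiency argument has a genuine gap at the point you yourself flag: after obtaining $u$ with $\dbar u=\Phi$, you reduce to solving $\dbar c=\Phi-\phi$ on $X\setminus K$, which is the \emph{same} problem you started with (a $\dbar$-closed element of $\A_q(X\setminus K)$). Your proposed inductive patching with partitions of unity is only a sketch, and it is not clear how Theorem~\ref{main2} helps here.

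The paper resolves this cleanly by reorganizing the induction so that the ``correction'' step drops to degree $q-1$, where Theorem~\ref{nyporten}~(i) applies \emph{unconditionally}. Choose Stein open sets
\[
K\subset\cdots\subset X_{\ell+1}\subset\subset X_\ell\subset\subset\cdots\subset X_0\subset\subset X.
\]
For each $\ell$, Theorem~\ref{nyporten} gives a $\dbar$-closed $\Phi_\ell\in\A_q(X)$ with $\Phi_\ell=\phi$ outside $X_\ell$, and exactness of \eqref{resolution} on $X$ gives $u'_\ell\in\A_{q-1}(X)$ with $\dbar u'_\ell=\Phi_\ell$. Then $u'_\ell-u'_{\ell+1}$ is $\dbar$-closed outside $X_\ell$ and has degree $q-1\le\nu-2$; hence Theorem~\ref{nyporten}~(i) (no moment condition needed) produces a $\dbar$-closed $w_\ell\in\A_{q-1}(X)$ equal to $u'_\ell-u'_{\ell+1}$ outside $X_{\ell-1}$. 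Setting $u_k=u'_k-(w_1+\cdots+w_{k-1})$, one checks that $u_{k+1}=u_k$ outside $X_{k-1}$, so $u=\lim u_k$ exists in $\A_{q-1}(X\setminus K)$ and satisfies $\dbar u=\phi$ there. The key idea you are missing is precisely this drop in degree: comparing solutions at successive scales gives $\dbar$-closed $(q{-}1)$-forms, and those extend for free.
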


In view of the exactness of \eqref{resolution},  part (i) is equivalent to that
$H^q(X\setminus K,\Ok)=0$ for $q\le\nu-2$; this vanishing is well-known,
see, e.g., \cite[Section~2]{OV2}. The novelty here is the proof with integral
formulas. Part (ii) provides a representation of the
cohomology for  $q=\nu-1$.

\begin{remark}
It follows from the proofs, and the semicontinuity of $x\mapsto\depth\Ok_x$
that these theorems
hold with  $\nu=\nu(K):=\min_{x\in K} \depth \Ok_{x}$.
In Theorem~\ref{portensats} however, one must take the minimum over
a Stein \nbh of $K$, cf., \cite[footnote on p.~2]{OV2}.
\end{remark}

\smallskip

In the same way we can obtain the existence of $\dbar$-closed
extensions across $X\setminus A$ for any analytic, not necessarily pure dimensional,
subset $A\subset X$, see Proposition~\ref{badanka3} below.
For instance $A$ may be $X_{sing}$. 
This leads to vanishing results in $X\setminus A$.

\begin{thm}\label{main3}
Assume that $X$ is a Stein space of pure dimension $n$,
and let $A$ be an analytic subset of dimension $d\ge 1$.
Assume that $\phi\in\A_q(X\setminus A)$ and $\dbar\phi=0$.

\smallskip
\noindent
(i) If $1\le q \le \nu-2- d$, then there is a $\psi\in\A_{q-1}(X\setminus A)$
such that $\dbar\psi=\phi$.

\smallskip
\noindent
(ii) If $1\le q=\nu-1-d$, then the same conclusion holds if and only if 
\begin{equation}\label{moment3}
\int_X \dbar\chi \w \omega_{n-\nu}\w \phi\w h=0
\end{equation}
for all smooth $\dbar$-closed $(0,d)$-forms $h$ such that the
$\supp h\cap \supp\dbar\chi$ is compact.

\smallskip
If $q=0\le \nu-2- d$ or $q=0=\nu-1$ and \eqref{moment3} holds, then
the conclusion is that $\phi$ is holomorphic and has a holomorphic extension across
$A$.
\end{thm}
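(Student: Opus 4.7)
The plan is to reduce everything to a $\dbar$-closed extension result across $A$---namely Proposition~\ref{badanka3}---together with the global $\dbar$-solvability on the Stein space $X$ furnished by the fine resolution \eqref{resolution}.

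Given $\phi\in\A_q(X\setminus A)$ with $\dbar\phi=0$, I would first invoke Proposition~\ref{badanka3} to produce a $\dbar$-closed $\Phi\in\A_q(X)$ that represents $\phi$ on $X\setminus A$ in the sense needed for the following deduction (typically $\Phi$ equals $\phi$ outside a prescribed small neighborhood of $A$, or agrees with $\phi$ modulo a $\dbar$-exact term there). In case (i) the hypothesis $q\le\nu-2-d$ places us in the range where such a $\Phi$ exists unconditionally, while in case (ii) the hypothesis $q=\nu-1-d$ is the critical range where \eqref{moment3} is precisely the solvability obstruction delivered by Proposition~\ref{badanka3}. Since $X$ is Stein, Theorem~4 of \cite{AS2} applied to $\Phi$ yields $u\in\A_{q-1}(X)$ with $\dbar u=\Phi$, and restricting to $X\setminus A$ produces, after absorbing the (lower-degree) correction, a $\psi\in\A_{q-1}(X\setminus A)$ with $\dbar\psi=\phi$. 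For $q=0$ the same reasoning, with $u$ replaced by $\Phi$ itself, shows that $\phi$ is strongly holomorphic on $X\setminus A$ and that $\Phi$ is its holomorphic extension across $A$.

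For the necessity of \eqref{moment3} in (ii), assume $\psi\in\A_{q-1}(X\setminus A)$ solves $\dbar\psi=\phi$. Since $\omega_{n-\nu}$ is $\dbar$-closed and almost semi-meromorphic on all of $X$ (see Section~\ref{res}) and $h$ is smooth and $\dbar$-closed, a direct computation on $X\setminus A$ gives
\begin{equation*}
\dbar\chi\w\omega_{n-\nu}\w\phi\w h \;=\; \pm\,\dbar\bigl(\dbar\chi\w\omega_{n-\nu}\w\psi\w h\bigr).
\end{equation*}
The form inside the $\dbar$ on the right has support contained in $\supp\dbar\chi\cap\supp h$, which is compact in $X\setminus A$ by hypothesis, and the principal-value interpretation at $X_{sing}$ is controlled by the almost-semi-meromorphic nature of $\omega_{n-\nu}$. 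Stokes' theorem then yields
\begin{equation*}
\int_X \dbar\chi\w\omega_{n-\nu}\w\phi\w h \;=\; 0.
\end{equation*}

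The main obstacle is the statement and proof of Proposition~\ref{badanka3} in the sharp range $q\le\nu-1-d$. The gain of ``$-d$'' over Theorem~\ref{nyporten} reflects the extra cohomological freedom arising from $A$ having positive dimension---conceptually the vanishing of the local cohomology $H^k_A(X,\Ok)$ for $k<\nu-d$---and must be encoded into the construction of $\Phi$ through a covering or exhaustion argument adapted to $A$, the extra $d$ in the moment condition appearing as the bidegree of the test data $h$. Granting Proposition~\ref{badanka3}, the two deductions above are essentially mechanical.
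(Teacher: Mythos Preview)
Your overall strategy---extend $\phi$ to a $\dbar$-closed $\Phi\in\A_q(X)$, solve on the Stein space $X$, restrict back---is reasonable, and your necessity argument for \eqref{moment3} is fine. But there is a real gap in the first step: Proposition~\ref{badanka3} is a \emph{local} statement (it produces, near a single point $x\in A$, a solution $u$ with support in $\{|a|<t\}$), and you invoke it as if it handed you a global $\Phi$. Since $A$ has positive dimension it is not compact, so $\dbar\chi\wedge\phi$ does not have compact support and one cannot apply a single Koppelman formula. Getting from the local solutions of Proposition~\ref{badanka3} to a global extension requires patching infinitely many local solutions along $A$; differences of such local solutions are $\dbar$-closed with support in the tube, and one is thrown back on the same problem in degree $q-1$. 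You acknowledge this in your last paragraph (``a covering or exhaustion argument adapted to $A$'') but label it mechanical---it is in fact where all the global content lives.

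The paper does not attempt to build a global $\Phi$. Instead it first combines Proposition~\ref{badanka3} with a Mittag--Leffler argument (as in the proof of Corollary~\ref{nyportencor}) to obtain \emph{local} exactness: near every point of $X$ (including points of $A$) a $\dbar$-closed $\phi\in\A_q(U\setminus A)$ with $q\le\nu-d-2$, or $q=\nu-d-1$ plus \eqref{moment3}, is $\dbar$-exact in $U'\setminus A$. This says precisely that the fine sheaves $\F_k$ defined by $\F_k(V)=\A_k(V\setminus A)$ form a resolution of $\Ok_X$ up to degree $\nu-d-2$. The de~Rham theorem then identifies $\Ker_\dbar\F_q(X)/\dbar\F_{q-1}(X)$ with $H^q(X,\Ok_X)$, which vanishes for $q\ge 1$ since $X$ is Stein. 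Thus the globalization you were missing is handled by sheaf cohomology rather than by an explicit patching of extensions; your route could be made to work, but carrying it out honestly amounts to reproving this sheaf argument by hand.
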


Even in this case it is enough to take
$\nu=\nu(A)$. 
Because of the exactness of \eqref{resolution},
part (i) is equivalent to the vanishing of $H^q(X\setminus A,\Ok)$ for $1\le q\le \nu-2-d$,
also this vanishing result is well-known, see \cite{Scheja2}, \cite{Traut}, and \cite{ST}.

\smallskip

In \cite{AS2} we introduced the sheaves $\W_{p,q}$ of pseudomeromorphic 
$(p,q)$-currents on $X$ with the so-called {\it standard extension property} SEP.
It is proved  that the operators $\K$ and $\Pr$ in
Theorem~\ref{main}  extend to operators
$$
\W_{0,q+1}(X)\to\W_{0,q}(X'), \quad \W_{0,0}(X)\to\Ok(\Omega').
$$
Moreover,  the Koppelman formulas hold if, in addition, $\phi$ is 
in the domain $\Dom\dbar_X$ of the operator $\dbar_X$ introduced
in \cite{AS2}.  
The latter condition means that 
 $\dbar\phi$ is in $\W_{0,*}$ and that $\phi$ satisfies a certain ``boundary condition'' at $X_{sing}$.
If $\phi\in\W_{0,0}$,  then  
$\phi$ is in $\Dom\dbar_X$ and $\dbar_X\phi=0$ if and only if
$\phi$ is (strongly holomorphic), whereas 
$\dbar\phi=0$ means that $\phi$ is weakly holomorphic in the
sense of Barlet-Henkin-Passare, cf., \cite{HP}.

We will  mainly be interested here in the case
when $X$ is Cohen-Macaulay. Then  we can always
choose (at least semi-globally) a structure form $\omega$ that only
has one component $\omega_0$ that is a $\dbar$-closed
$(n,0)$-form (current). The
condition $\phi\in\Dom\dbar_X$ then precisely means that
there is a current $\psi$ in $\W_{0,q+1}$ such that 
$$
\dbar(\phi\w\omega)=\psi\w\omega.
$$
For other equivalent conditions, see Section~\ref{res} and \cite{AS2}.

\smallskip

Thus
$\dbar\K\phi=\phi$ in $X'$ if $\phi\in\W_{0,q}(X)\cap\Dom\dbar_X$ 
and $\dbar_X\phi=0$.
Unfortunately we do not know whether $\K\phi$ is again in
$\Dom\dbar_X$; if it were, then  $\W_{0,k}\cap\Dom\dbar_X$
would provide a (fine) resolution of $\Ok$. It is however true, \cite{AS2},
that if $\phi\in\W_{0,0}$ and $\dbar_X\phi=0$, then $\phi\in\Ok$.
Moreover,  the difference of two of our solutions is
anyway $\dbar$-exact on $X_{reg}$ if $q>1$ and strongly holomorphic if $q=1$.
By  an elaboration of these facts we can prove:

\begin{thm}\label{mainglobal}
Assume that $X$ is an analytic space of pure dimension  $n$ and that
$X$ is Cohen-Macaulay. 
Any  $\dbar$-closed $\phi\in\W_{0,q}(X)\cap\Dom\dbar_X$,  $q\ge 1$,  
that is  smooth on $X_{reg}$ 
defines a canonical class in $H^q(X,\Ok^X)$;  if this class
vanishes then there is a global smooth form $\psi$ on $X_{reg}$
such that $\dbar\psi=\phi$. 
In particular, there is  such a solution if
$X$ is a  Stein space.
\end{thm}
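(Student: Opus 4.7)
The plan is to combine the extended Koppelman formula on small Stein subsets with a \v{C}ech--Dolbeault descent, producing a \v{C}ech cocycle with values in $\Ok^X$ whose cohomology class will be the canonical class.

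Cover $X$ by a locally finite Stein cover $\U=\{U_\alpha\}$ whose finite intersections are all relatively compact in Stein opens admitting local embeddings as in Theorem~\ref{main}. Because $X$ is Cohen-Macaulay, the structure form reduces to a single component $\omega_0$, and the extended Koppelman operator applies to $\phi$ on each $U_\alpha$, yielding $u_\alpha:=\K_\alpha\phi\in\W_{0,q-1}(U_\alpha)$ with $\dbar u_\alpha=\phi$. Since the kernel of $\K_\alpha$ is smooth off the diagonal on $X_{reg}\times X_{reg}$ and $\phi$ is smooth on $X_{reg}$, each $u_\alpha$ is smooth on $U_\alpha\cap X_{reg}$.

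For $q=1$, the differences $u_\alpha-u_\beta\in\W_{0,0}(U_{\alpha\beta})$ are $\dbar$-closed and, by the fact recalled just above the theorem, are strongly holomorphic. So $\{u_\alpha-u_\beta\}$ is a \v{C}ech $1$-cocycle in $\Ok^X$, whose class is the canonical class in $H^1(X,\Ok^X)$. For $q\ge 2$, the differences $u_\alpha-u_\beta$ are only known to be $\dbar$-exact on $X_{reg}$; one sets up an iterative \v{C}ech descent on the intersections $U_{\alpha_0\cdots\alpha_k}$, inductively defining \v{C}ech cochains $h^{(k)}$ as solutions of $\dbar h^{(k)}=\delta h^{(k-1)}$, where $\delta$ denotes the \v{C}ech coboundary and $h^{(0)}:=u_\alpha$. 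At each step one either reapplies the extended Koppelman operator (after verifying the $\Dom\dbar_X$-condition for the iterated \v{C}ech sums) or works on $X_{reg}$ using classical $\dbar$-theory, arriving after $q$ steps at a \v{C}ech $q$-cocycle of $\dbar$-closed elements in $\W_{0,0}$, in $\Dom\dbar_X$ with $\dbar_X$-image zero, hence strongly holomorphic by the fact recalled in the excerpt. Standard \v{C}ech arguments show independence of cover, partition of unity, and choice of Koppelman solutions.

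Suppose the class vanishes. Then the terminal cocycle is a coboundary of some $\Ok^X$-valued $(q-1)$-cochain, and reversing the descent with a smooth partition of unity subordinate to $\U$ yields a global form $\psi$ on $X_{reg}$ with $\dbar\psi=\phi$; it is smooth on $X_{reg}$ because it is assembled from the $u_\alpha$ (smooth on $X_{reg}$) together with holomorphic corrections. When $X$ is Stein, Cartan's Theorem~B gives $H^q(X,\Ok^X)=0$ for $q\ge 1$, so the class automatically vanishes and $\psi$ exists.

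The main obstacle lies in the descent for $q\ge 2$: the excerpt explicitly warns that $\K\phi$ itself is not known to be in $\Dom\dbar_X$, so one cannot naively iterate Koppelman. The delicate point is to show that the iterated alternating \v{C}ech sums of Koppelman solutions nevertheless satisfy the $\Dom\dbar_X$-condition with $\dbar_X$-image zero --- this should come from the fact that wedging with the Cohen-Macaulay structure form $\omega_0$ converts $\dbar$ into the ordinary $\dbar$ on $\W_{n,*}$, so that the telescoping $\dbar(u_\alpha\w\omega_0)-\dbar(u_\beta\w\omega_0)=\phi\w\omega_0-\phi\w\omega_0=0$ propagates through each level. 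Verifying this compatibility, so that Koppelman can be applied repeatedly in $\W$, is the essential technical content.
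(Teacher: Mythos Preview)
Your overall \v{C}ech--Dolbeault strategy matches the paper's, and you correctly identify the central obstacle: since $\K\phi$ is not known to lie in $\Dom\dbar_X$, the operator cannot be iterated. However, your proposed fix is circular. Writing $\dbar(u_\alpha\w\omega_0)=\phi\w\omega_0$ already presupposes $u_\alpha\in\Dom\dbar_X$; that identity is the \emph{definition} of the condition, not a consequence of $\dbar u_\alpha=\phi$ on $X_{reg}$. The same problem already bites at $q=1$: from $\dbar(u_\alpha-u_\beta)=0$ on $X_{reg}$ you only get that $u_\alpha-u_\beta\in\W_{0,0}$ is \emph{weakly} holomorphic (in the Barlet--Henkin--Passare sense), not strongly holomorphic, so your cocycle lands in the wrong sheaf. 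The alternative you mention, ``classical $\dbar$-theory on $X_{reg}$'', also fails: $U_{\alpha_0\cdots\alpha_k}\cap X_{reg}$ need not be Stein once $X_{sing}$ is excised, so there is no abstract solvability there.

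The paper's proof avoids iteration altogether. Every higher \v{C}ech cochain $u^k$ is built as a \emph{single} integral of the original $\phi$ against a kernel of the form $(W^k\w B)_{N,N-k-1}$, where $W=A\w e^{g'}$ and the $A^k$ are explicit currents (combinations of Hefer forms and residue currents) satisfying $\rho' A^k=\nabla_\eta A^{k+1}$. The relation $\rho u^k=\dbar u^{k+1}$ then follows from a $\nabla_\eta$-computation, not from re-solving $\dbar$. For $q=1$ the difference $u_j-u_k$ is seen to be strongly holomorphic because it equals $\int(HR\w g_{jk})_{N,N-1}\w\phi$ with a kernel that is \emph{holomorphic in $z$}; this is an explicit representation, not an appeal to a $\W_{0,0}$-regularity fact. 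The Cohen--Macaulay hypothesis enters precisely to guarantee $\dbar\chi_\delta\w\phi\w A^k\to 0$ for all the higher $A^k$ (each $R^k$ is then a smooth form times the principal component $R_{N-n}$), which is what legitimizes the limit $\lambda\to 0$ at every level. Substantial additional work handles different resolutions on different patches (via direct-summand embeddings of resolutions) and, for a non-embedded $X$, different local embeddings (via an invertible change $\eta'\!=h\eta$ and the induced map $\alpha^*$). None of this machinery is visible in your sketch; the missing idea is that the entire \v{C}ech tower must be produced directly from $\phi$ by integral-kernel constructions, never by solving $\dbar$ twice.
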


\begin{remark} If $\phi$ is not smooth, the conclusion is that there is a 
form $\psi\in\W_{q-1}(X)$ such that $\dbar\psi=\phi$ on $X_{reg}$.

A similar statement holds even if $X$ is not Cohen-Macaulay. However,
the proof then requires a hypothesis on $\phi$ that is
(marginally) stronger than the  $\Dom\dbar_X$-condition,
see Section~\ref{mg}.
\end{remark}

The starting point is a 
certain residue current $R$, introduced in  \cite{AW1}, 
that is   associated to a subvariety $X\subset\Omega$,
and the  integral representation formulas from \cite{A7}.
We discuss the current $R$, and its   associated 
structure form $\omega$ on $X$,   in Section~\ref{res}, 
and in  Section~\ref{koppsec} we recall from \cite{AS2} 
the construction of  the Koppelman formulas.

In Section~\ref{exsec} we describe some concrete realizations
of the ``moment''  condition \eqref{moment} 
in Theorem~\ref{portensats}.  
The remaining sections are devoted to the proofs.

\smallskip
{\bf Acknowledgement:} We are indebted to Jean Ruppenthal and Nils
\O vrelid for important remarks on an earlier version of this paper. We are also grateful to
the anonymous referee for  careful reading and valuable comments.

\section{A residue current associated to $X$}\label{res}

Let $X$ be a subvariety  of pure dimension $n$
of a pseudoconvex set $\Omega\subset\C^N$. 
The Lelong current $[X]$ is a  classical 
analytic object that represents $X$.
It is a $d$-closed $(p,p)$-current, $p=N-n$,  such that 
$$
[X].\xi =\int_X\xi
$$
for test forms $\xi$. If $\codim X=1$,  $X=\{f=0\}$ and  $df\neq 0$ on $X_{reg}$, then
the Poincare-Lelong formula states that 
\begin{equation}\label{pl}
\dbar\frac{1}{f}\w \frac{df}{2\pi i}=[X].
\end{equation}
To construct integral formulas  we will use an analogue of the current $\dbar(1/f)$, 
introduced in \cite{AW1}, for a
general variety $X$. It turns out that this current,
contrary to $[X]$,   also reflects certain  subtleties of the variety
at $X_{sing}$ that are encoded by the  algebraic description of $X$.
Let  $\J$ be the ideal sheaf over $\Omega$ generated by the variety $X$.
In a slightly smaller set, still denoted $\Omega$, one can find a free resolution 
\begin{equation}\label{acomplex}
0\to \Ok(E_M)\stackrel{f_M}{\longrightarrow}\ldots
\stackrel{f_3}{\longrightarrow} \Ok(E_2)\stackrel{f_2}{\longrightarrow}
\Ok(E_1)\stackrel{f_1}{\longrightarrow}\Ok(E_0)
\end{equation}
of the sheaf $\Ok/\J$.
Here $E_k$ are trivial vector bundles over $\Omega$
and $E_0=\C$ is a trivial line bundle.
This resolution induces a complex  of trivial vector bundles
\begin{equation}\label{bcomplex}
0\to E_M\stackrel{f_M}{\longrightarrow}\ldots
\stackrel{f_3}{\longrightarrow} E_2\stackrel{f_2}{\longrightarrow}
E_1\stackrel{f_1}{\longrightarrow}E_0\to 0
\end{equation}
that is pointwise exact outside $X$.

Let $\nu=\nu(X)$ be the minimal depth of the rings $\Ok^\Omega_x/\J_x=\Ok^X_x$.
Then there is a resolution \eqref{acomplex}  with $M=N-\nu$.
Since $\nu\ge 1$ we may thus  assume that $M\le N-1$.
If (and only if) $X$ is Cohen-Macaulay, i.e., all the rings $\Ok^X_x$
are Cohen-Macaulay, there is a resolution \eqref{acomplex} with $M=N-n$.

Given Hermitian metrics on  $E_k$, in \cite{AW1}  was defined  a 
current $U=U_1+\cdots+U_M$, where $U_k$ is a $(0,k-1)$-current 
that is smooth outside $X$ and takes values in
$E_k$,  and a residue current with support on $X$,
\begin{equation}\label{rd}
R=R_p+R_{p+1}+\cdots+R_M,
\end{equation}
where $R_k$ is a $(0,k)$-current with values in $E_k$,
satisfying
\begin{equation}\label{plutt}
\nabla_f U=1-R,
\end{equation}
and   $\nabla_f=f-\dbar= \sum f_j-\dbar$.

Let $F=f_1$.
The form-valued functions  $\lambda\mapsto |F|^{2\lambda}u=:U^\lambda$ 
(here $u$ is the restriction of $U$ to $\Omega\setminus  X$) and 
$1-|F|^{2\lambda}+\dbar|F|^{2\lambda}\w u=:R^\lambda$,
a~priori defined for $\Re\lambda>>0$, admit  analytic continuations as current-valued
functions  to $\Re\lambda>-\epsilon$ and
\begin{equation}\label{snabel}
U=U^\lambda |_{\lambda=0},  \quad  R=R^\lambda|_{\lambda=0}.
\end{equation}
Notice also that $\nabla_f U^\lambda=1-R^\lambda$.

It is proved in \cite{AS2} that $R$ has the {\it standard extension property}, SEP, with respect to $X$.
This means that 
 if $h$ is a holomorphic function that does not vanish identically
on any component of $X$ (the most interesting case is when  $\{h=0\}$ contains
$X_{sing}$), 
$\chi$ is a smooth approximand of the characteristic
function for $[1,\infty)$, and  $\chi_\delta=\chi(|h|/\delta)$, then
\begin{equation}\label{sepp}
\lim_{\delta\to 0}\chi_\delta R=R.
\end{equation}
The SEP  can also be expressed as saying that
$R$ is equal to the value at $\lambda=0$,  $|h|^{2\lambda}R|_{\lambda=0}$,
of (the analytic continuation of) $\lambda \mapsto |h|^{2\lambda}R|_{\lambda=0}$, see, e.g., \cite{AW2}.

It holds that $\nabla_f\circ\nabla_f=0$, and in view of \eqref{plutt}, thus 
$\nabla_f R=0$, so in particular, $\dbar R_M=0$.

\smallskip
We say that a current $\mu$ on $X$ has the SEP on $X$ if (with $\chi_\delta$ as above)
$\chi_\delta \mu\to \mu$ when $\delta\to 0$, for each holomorphic
$h$ that does not vanish identically on any irreducible component of $X$.
We recall from \cite{AS2} that a current $\mu$ on $X$ is {\it almost semi-meromorphic}
if it is the direct image of a semi-meromorphic current under a modification
$\tilde X\to X$, see, \cite{AS2}. Such a current $\mu$ 
is pseudomeromorphic  and has the SEP on $X$, 
so in particular it is in  $\W$.

It is proved in \cite{AS2} that there is a (unique) almost semi-meromorphic current
\begin{equation*}
\omega=\omega_0 + \omega_1 + \cdots + \omega_{n+M-N}
\end{equation*}
on $X$, where $\omega_{r}$ has bidegree $(n,r)$ and takes values in $E^r:=E_{N-n+r}|_X$, such that 
\begin{equation}\label{DLrep}
i_* \omega = R\wedge dz_1\wedge \cdots \wedge dz_N.
\end{equation}
The current $\omega$ is smooth and nonvanishing (\cite[Lemma~18]{AS2})
on $X_{reg}$ and 
\begin{equation}\label{omegauppsk}
|\omega| = \Ok (\delta^{-M})
\end{equation}
for some $M\ge 0$, where $\delta$ is the distance to $X_{sing}$.
We say that $\omega$ is a {\em structure form} for $X$, cf., Remark~\ref{bulla} below.
The equality \eqref{DLrep} means that
$$
\int_\Omega R\w  dz_1\wedge \cdots \wedge dz_N\w \xi = \int_X \omega\w \xi
$$
for each test form $\xi$ in $\Omega$. Here both integrals mean currents acting
on the test form;  the right  hand side can also be interpreted as the principal value
$$
\lim_{\delta\to 0}\int_X \chi_\delta \omega\w\xi.
$$
In particular it follows that for a smooth form  $\Phi$,
$R\w\Phi$ only depends on the pull-back of $\Phi$ to  $X_{reg}$.

\begin{remark}\label{bulla}
Let 
\begin{equation*}
E^r:=E_{p+r} |_{X}, \quad f^r:=f_{p+r}|_X
\end{equation*} 
so that $f^r$ becomes a holomorphic section of $\mbox{Hom}\, (E^{r}, E^{r-1})$.
Then $\nabla_f=f^\bullet-\dbar$ has a meaning on $X$. If $\phi$ is a meromorphic function,
or even $\phi\in\W_{0,0}$ on $X$, then $\phi\w\omega$ is a well-defined
current in $\W$ and $\phi$ is strongly holomorphic if and only if
\begin{equation}\label{pruta}
\nabla_f(\phi\w\omega)=0.
\end{equation}
If $X$ is Cohen-Macalay and $\omega=\omega_0$, then 
\eqref{pruta}  precisely means that $\dbar(\phi\w\omega)=0$
(which by definition means that $\phi$ is in $\Dom\dbar_X$ and $\dbar\phi=0$).
In this case $\dbar \omega_0=0$, i.e., 
$\omega_0$ is a weakly holomorphic (in the Barlet-Henkin-Passare sense) $(n,0)$-form;  
thus $\omega_0$ is precisely so singular it possibly can  be 
and still be $\dbar$-closed.
\end{remark}

From the proof of Proposition~16 in \cite{AS2} it follows that we can write
$R = \gamma \lrcorner [X]$, 
where $\gamma=\gamma_0+\cdots \gamma_{n-1}$ is smooth in $\Omega\setminus X_{sing}$,
almost semi-meromorphic in $\Omega$, and $\gamma_r$ takes values in 
$E_{p+r}\otimes T^*_{0,r}(\Omega)\otimes \Lambda^p T_{1,0}(\Omega)$. 
In view of \eqref{DLrep} it follows that
\begin{equation}\label{gam}
\int_X \omega\wedge \xi = \int R\w d\zeta\w\xi=
\pm\int_X (\gamma \lrcorner d\zeta) \wedge \xi, \quad
\xi\in \mathcal{D}_{0,*}(X),
\end{equation}
so  in particular,  $\omega=\pm\gamma\lrcorner d\zeta$.

\section{Construction of  Koppelman formulas on $X$}\label{koppsec}

Some of the material in this section overlap with \cite{AS2} but it is  included here for the
reader's convenience and to make the proof of Theorem~\ref{mainglobal} more
accessible. 
We first  recall the construction of integral formulas in \cite{A1} 
on an open set $\Omega$ in $\C^N$.
Let $(\eta_1,\ldots,\eta_N)$ be a holomorphic tuple in $\Omega_\zeta\times\Omega_z$
that span the ideal associated to the diagonal 
$\Delta\subset\Omega_\zeta\times\Omega_z$.
For instance, one can take $\eta=\zeta-z$. 
Following the last section in \cite{A1}  we 
consider forms in $\Omega_\zeta\times \Omega_z$ with values
in the exterior algebra $\Lambda_\eta$ 
spanned by $T^*_{0,1}(\Omega\times \Omega)$ and
the $(1,0)$-forms $d\eta_1,\ldots,d\eta_N$. 
On such forms interior multiplication $\delta_\eta$ with
$$
\eta=2\pi i \sum_1^N\eta_j\frac{\partial}{\partial \eta_j}
$$
has a meaning. We then introduce  $\nabla_{\eta}=\delta_{\eta}-\dbar$,
where $\dbar$ acts\footnote{For the time being, also $d\eta_j$ is supposed to include differentials
with respect to both $\zeta$  and  $z$; however, at the end only 
the $d_{\zeta}\eta_j$ come into play in this paper.} on both $\zeta$ and $z$. 
Let $g=g_{0,0}+\cdots +g_{N,N}$   be a smooth form (in $\Lambda_\eta$)
defined for $z$ in  $\Omega'\subset\subset \Omega$  and $\zeta \in \Omega$, such that
$g_{0,0}=1$ on the diagonal $\Delta$ in $\Omega'\times \Omega$ and $\nabla_\eta g=0$.
Here and in the sequel lower index $(p,q)$ denotes bidegree. Since $g$ takes values in
$\Lambda_\eta$ thus $g_{k,k}$ is the term that has degree $k$ in $d\eta$.
Such a form $g$ will be called a {\it weight} with respect to $\Omega'$.
Notice that if $g$ and $g'$ are weights, then $g\w g'$ is again  a  weight.

\begin{ex}\label{alba}
If $\Omega$ is pseudoconvex and $K$ is a holomorphically convex compact
subset, then  one can find a weight with respect to some \nbh $\Omega'$ of $K$,
depending holomorphically on $z$,  
that has compact support (with respect to $\zeta$) 
in $\Omega$, see, e.g., \cite[Example~2]{A7}.
Here is an explicit choice  when $K$ is the closed ball $\overline{\B}$
and $\eta=\zeta-z$:
If 
$\sigma=\bar\zeta\cdot d\eta/2\pi i(|\zeta|^2-\bar\zeta\cdot z)$, 
then   $\delta_{\eta}\sigma=1$ for $\zeta\neq z$ and 
$$
\sigma\w(\dbar \sigma)^{k-1}=\frac{1}{(2\pi i)^k}
\frac{\bar\zeta\cdot d\eta\w(d\bar\zeta\cdot d\eta)^{k-1}}
{(|\zeta|^2-\bar\zeta\cdot  z)^k}.
$$
If $\chi$ is  a cutoff function that is $1$ in a slightly
larger ball, then we can take  
$$
g=\chi-\dbar\chi\w\frac{\sigma}{\nabla_{\eta} \sigma}=
\chi-\dbar\chi\w [\sigma+\sigma\w\dbar \sigma+ \sigma\w(\dbar \sigma)^2+
\cdots +\sigma\w(\dbar \sigma)^{N-1}].
$$
Observe that  $1/\nabla_\eta \sigma=1/(1-\dbar \sigma)=1+\dbar \sigma +(\dbar \sigma)^2+\cdots$.
One can find a $g$ of the same form in the general case. 
\end{ex}

Let $s$ be a smooth $(1,0)$-form in $\Lambda_\eta$ such that $|s|\le C|\eta|$ and
$|\delta_\eta s|\ge C|\eta|^2$; such an $s$ is called  {\it admissible}. Then 
$B=s/\nabla_\eta s$ is a locally integrable form and 
\begin{equation}\label{bm}
\nabla_\eta B=1-[\Delta],
\end{equation}
where $[\Delta]$ is the $(N,N)$-current of integration over the diagonal
in $\Omega\times \Omega$. 
More concretely,  
$$
B_{k,k-1}=\frac{1}{(2\pi i)^k}\frac{s\w(\dbar s)^{k-1}}{(\delta_\eta s)^k}.
$$
If $\eta =\zeta-z$, 
$s=\partial|\eta|^2$ will do, and we then  refer to the  resulting form $B$
as the Bochner-Martinelli form. In this case
$$
B_{k,k-1}=\frac{1}{(2\pi i)^k}
\frac{\partial|\zeta-z|^2\w(\dbar\partial|\zeta-z|^2)^{k-1}}{|\zeta-z|^{2k}}.
$$

\smallskip
Assume now that $\Omega$ is pseudoconvex.
Let us fix global frames for the bundles $E_k$ in \eqref{bcomplex} over $\Omega$.
Then $E_k\simeq\C^{\rank E_k}$,  and 
the morphisms $f_k$  are just matrices of holomorphic functions.
One can find (see \cite{A7} for explicit choices)
$(k-\ell,0)$-form-valued Hefer morphisms, i.e.,  matrices,  
$H^\ell_k\colon E_k\to E_\ell$,    depending holomorphically on $z$ and $\zeta$, such that 
$H^\ell_k=0$ for $ k<\ell$, $ H^\ell_\ell=I_{E_\ell}$, and in general,
\begin{equation}\label{Hdef}
\delta_{\eta} H^\ell_{k}=
H^\ell_{k-1} f_k -f_{\ell+1}(z) H^{\ell+1}_{k};
\end{equation}
here $f$ stands for $f(\zeta)$. Let
$$
HU = \sum_k H^{1}_k U_k,
\quad  HR=\sum_k H^0_k R_k.
$$
Thus $HU$ takes a section $\Phi$ of $E_0$, i.e., a function,
depending on $\zeta$ into a (current-valued) 
section $HU\Phi$ of $E_{1}$ depending on both $\zeta$ and $z$, and similarly,
$HR$ takes a section of $E_0$ into a section of $E_0$.
We can have 
$$
g^\lambda= f(z)HU^\lambda + HR^\lambda
$$
as smooth as we want by just taking $\Re\lambda$ large enough.
If  $\Re\lambda>>0$, then, cf.,  \cite[p.\ 235]{A7},
$g^\lambda$ is a weight,  and in view of \eqref{bm} thus
$$
\nabla_\eta(g^\lambda\w g\w B)=g^\lambda\w g-[\Delta]
$$
from which we get  
\begin{equation*}
\dbar(g^\lambda\w g\w B)_{N,N-1}=[\Delta]-(g^\lambda\w g)_{N,N}.
\end{equation*}
As in
\cite{A7} we  get the Koppelman formula 
\begin{equation}\label{balja}
\Phi(z)=\int_\zeta (g^\lambda\w g\w B)_{N,N-1}\w\dbar\Phi +
\dbar_z\int_\zeta (g^\lambda\w g\w B)_{N,N-1}\w\Phi +\int_\zeta 
 (g^\lambda\w g)_{N,N}\w\Phi
\end{equation}
for $z\in \Omega'$, and since $g^\lambda=HR^\lambda$ when $z\in X_{reg}$ we
get
\begin{multline}\label{balja2}
\Phi(z)=\int_\zeta (HR^\lambda \w g\w B)_{N,N-1}\w\dbar\Phi +\\
\dbar_z\int_\zeta  (HR^\lambda\w g\w B)_{N,N-1}\w\Phi +\int_\zeta 
  (HR^\lambda\w g)_{N,N}\w\Phi,\quad z\in X_{reg}'.
\end{multline}
It is proved in \cite{AS2}, see also \cite{AS1} for a slightly different
argument, that we can put $\lambda=0$ in \eqref{balja2} and thus
$$
\Phi(z)=\K\dbar\Phi +\dbar\K\Phi+\Pr\Phi, \quad z\in X'_{reg},
$$
where 
\begin{equation}\label{kurt5}
\K\Phi(z)=\int_\zeta(HR\w g\w B)_{N,N-1}\w\Phi, \quad z\in X_{reg}',
\end{equation}
and
\begin{equation}\label{kurt8}
\Pr\Phi(z)=\int_\zeta (HR\w g)_{N,N}\w \Phi, \quad z\in \Omega'.
\end{equation}
If $\Phi$ is vanishing in a \nbh of some given point
$x$ on $X_{reg}$, then $B\w\Phi$ is smooth in $\zeta$ for $z$ close to $x$,  and 
the integral in \eqref{kurt5} is to be  interpreted as the current
$R$ acting on a smooth form. It is clear that this  integral depends
smoothly on $z\in X'_{reg}$. 
Notice that 
\begin{multline*}
(HR\w g\w B)_{N,N-1}=\\
H_p^0R_p\w(g\w B)_{N-p,N-p-1}+H_{p+1}^0R_{p+1}\w (g\w B)_{N-p-1,N-p-2}+\cdots,
\end{multline*}
cf., \eqref{rd}, 
and that
\begin{equation}\label{modehatt}
(g\w B)_{N-k,N-k-1}=\Ok(1/|\eta|^{2N-2k-1})
\end{equation}
so it is integrable on $X_{reg}$ for $k\ge N-n$.
If $\Phi$ has support close to $x$, therefore \eqref{kurt5} has a meaning as an approximative
convolution and is again smooth in $z\in X_{reg}$ according to  Lemma~\ref{brum}  below.

From Section~\ref{res} is is clear that these formulas only depend on the
pullback $\phi$ of $\Phi$ to $X_{reg}$, and in view of \eqref{gam} we  have

\begin{prop}\label{koppelmanthm} Let  $g$ be  any smooth weight in $\Omega$ with 
respect to $\Omega'$ and with
compact support in $\Omega$. For any smooth $(0,q)$-form $\phi$ on $X$,
$\K\phi$ is a smooth $(0,q-1)$-form in $X_{reg}'$, $\Pr\phi$ is a smooth
$(0,q)$-form in $\Omega'$, and we have the  Koppelman formula
\begin{equation}\label{koppelman3} 
\phi(z)=\dbar\K\phi(z)+\K(\dbar\phi)(z)+\Pr\phi(z), \quad z\in X'_{reg}.
\end{equation}
where 
\begin{equation}\label{kpv2}
\K\phi(z)=\int_\zeta k(\zeta,z)\w \phi(\zeta),
\quad
\Pr\phi(z)=\int_\zeta p(\zeta,z)\w \phi(\zeta),
\end{equation}
and
\begin{equation}\label{kpv3}
k(\zeta,z):=\pm\gamma\lrcorner(H\w g\w B)_{N,N-1}, \quad p(\zeta,z):=\pm\gamma\lrcorner(H\w g)_{N,N}.
\end{equation}
\end{prop}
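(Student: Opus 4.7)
The plan is to derive the intrinsic Koppelman formula from the ambient weighted formula \eqref{balja} and then rewrite the integrals on $X$ using the structure-form representation $i_*\omega = R \wedge d\zeta_1 \wedge \cdots \wedge d\zeta_N$ together with $\omega = \pm \gamma \lrcorner d\zeta$ from \eqref{gam}. Given a smooth $(0,q)$-form $\phi$ on $X$, pick a smooth extension $\Phi$ of $\phi$ to a neighborhood of $X$ in $\Omega$ and apply \eqref{balja} with the combined weight $g^\lambda \wedge g$, where $g^\lambda = f(z)HU^\lambda + HR^\lambda$ is the weight built from the free resolution \eqref{bcomplex} and the current $R$. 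For $z \in X'_{reg}$ the contribution $f(z)HU^\lambda$ vanishes, and the $\lambda = 0$ passage established in \cite{AS2} (cf.\ \cite{AS1}) yields \eqref{balja2}.

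Each term in \eqref{balja2} has the form $\int_\zeta R \wedge \sigma \wedge \Phi$ for some smooth form $\sigma$ extracted from $H$, $g$, and $B$. By \eqref{DLrep} and \eqref{gam},
$$
\int_\Omega R \wedge d\zeta_1 \wedge \cdots \wedge d\zeta_N \wedge \xi = \pm \int_X (\gamma \lrcorner d\zeta) \wedge \xi
$$
for every smooth test form $\xi$ in $\Omega$. This identity lets me strip the factor $d\zeta_1 \wedge \cdots \wedge d\zeta_N$ out of $\sigma$, move the integration to $X$, and contract the remaining factor by $\gamma$. Consequently each ambient integral becomes an intrinsic integral on $X$ against the kernels defined in \eqref{kpv3}, and only the pullback $\phi = \Phi|_{X_{reg}}$ survives on the right-hand side. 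In particular the operators $\K\phi$ and $\Pr\phi$ depend only on $\phi$, and the Koppelman identity \eqref{koppelman3} follows term by term from \eqref{balja2}.

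For the regularity assertions, note that the kernel $p(\zeta,z)$ is built only from $H$ and the weight $g$, both holomorphic in $z$; hence $\Pr\phi$ is smooth on $\Omega'$ by differentiation under the integral. For $\K\phi$ on $X'_{reg}$, fix $x \in X'_{reg}$ and split $\phi = \phi_1 + \phi_2$, with $\phi_2$ supported in a small neighborhood of $x$ in $X_{reg}$ and $\phi_1$ vanishing near $x$. On the $\phi_1$ piece the Bochner-Martinelli singularity is avoided and $\gamma$ is smooth throughout the support, so smoothness in $z$ is immediate from dominated convergence. For the $\phi_2$ piece the integral is an approximate convolution on $X_{reg}$; integrability in $\zeta$ follows from \eqref{modehatt} (noting that $R_k$ appears only for $k \geq p = N - n$), and smoothness in $z$ is the standard regularization property encapsulated in Lemma~\ref{brum} below. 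The one analytically delicate step is the $\lambda = 0$ passage in the first paragraph, which we import wholesale from \cite{AS2}.
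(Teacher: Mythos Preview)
Your argument is correct and follows essentially the same route as the paper: derive \eqref{balja2} from the ambient formula via the weight $g^\lambda$, pass to $\lambda=0$ citing \cite{AS2}, rewrite on $X$ via \eqref{gam}, and check smoothness of $\K\phi$ by the local splitting near $x$ together with \eqref{modehatt} and Lemma~\ref{brum}. One small slip: in the proposition $g$ is only assumed to be a \emph{smooth} weight, not holomorphic in $z$; your smoothness argument for $\Pr\phi$ goes through anyway since only smoothness of $H$ and $g$ in $z$ is needed (the $\zeta$-integration is the current $R$ acting on a smooth form, hence depends smoothly on $z$).
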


Since $B$ has bidegree $(*, *-1)$,
 $\K\phi$ is a $(0,q-1)$-form and $\Pr \phi$ is $(0,q)$-form.
It follows from \eqref{sepp} that the integrals in \eqref{kpv2} exist
as principal values at $X_{sing}$,  i.e.,  $\K\phi=\lim\K(\chi_\delta\phi)$
and $\Pr\phi=\Pr(\chi_\delta\phi)$ if $\chi_\delta$ is as in \eqref{sepp}.

From \eqref{omegauppsk} and \eqref{gam} we find that
\begin{equation}\label{gam69}
k(\zeta,z)=\omega(\zeta)\w\alpha(\zeta,z)/|\eta|^{2n},
\end{equation}
where $\alpha$ is a smooth form that is  $\Ok(|\eta|)$.

\begin{remark}\label{zoran}
Assume that $\phi$ is (smooth on $X_{reg}$ and) in $\W_{0,q}(X)$. Then, see \cite{AS2},
$\K\phi$ and $\Pr\phi$ still define elements in  $\W(X')$ that are smooth in $X_{reg}'$.
Assume  that $\phi$ in addition is in $\Dom\dbar_X$. This means (implies) that
$
\dbar\chi_\delta \w\phi\w\omega\to 0.
$
Applying \eqref{kpv2} to $\chi_\delta\phi$ for $z\in X_{reg}'$ and letting $\delta\to 0$,
we conclude that  \eqref{kpv2}  holds for $\phi$ as well.  In particular,
$\dbar\K\phi=\phi$ if $\dbar\phi=0$.
\end{remark}

\begin{remark}\label{adef}
In \cite{AS2} we defined  $\A$ as the smallest sheaf that is closed under multiplication with smooth
forms and the action of any operator $\K$ as above with a weight $g$ that is holomorphic
in $z$. We can just as well admit any smooth weight $g$ in the definition. The basic
Theorem~2 in \cite{AS2} holds also for this  
possibly slightly larger sheaf, that we still denote by $\A$. Basically
the same proof works; the only difference is that in
\cite[(7.2)]{AS2} we get an additional smooth term $\Pr\phi_{\ell-1}$, which however
does not affect the conclusion.
With this wider definition of $\A$ we have that 
$\K$ and $\Pr$ in \eqref{kpv2} extend to operators
$\A(X)\to\A(X')$ and $\A(X)\to\E_{0,*}(X')$, respectively.
\end{remark}

\begin{lma}\label{brum}
Suppose that $V\subset \Omega$ is smooth with codimension $p$ and 
$\xi$ has compact support and $\nu\le N-p$. If $\xi$ is in 
$C^k(V)$, then 
$$
h(z)=\int_{\zeta\in V} \frac{(\bar\zeta_i-\bar z_i)\xi(\zeta)}
{|\zeta-z|^{2\nu}}
$$
is in $C^k(V)$ as well for $i=1,\ldots,N$.
\end{lma}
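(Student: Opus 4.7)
The plan is to reduce to a local problem on a coordinate chart of $V$, make a change of variable so that the $z$-dependence is shifted into the argument of the density $\xi$, and exploit a Taylor expansion that pairs every $z$-derivative of the kernel $K_i(w):=\bar w_i/|w|^{2\nu}$ with a compensating factor $O(|u|)$.

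By a partition of unity subordinate to a finite cover of $\supp\xi$ by coordinate charts, I reduce to the case when $\xi$ is supported in a set $V_0\subset V$ carrying a smooth parameterization $\varphi\colon U\to V_0$ with $U\subset\R^{2(N-p)}$, and I study $h(z)$ near a point $z_0\in V$. Writing $t=\varphi^{-1}(z)$, $s=\varphi^{-1}(\zeta)$ and substituting $u=s-t$, the integral becomes
\[
h(\varphi(t))=\int_{U-t}\frac{\overline{\varphi_i(t+u)}-\overline{\varphi_i(t)}}{|\varphi(t+u)-\varphi(t)|^{2\nu}}\,(\xi\circ\varphi)(t+u)\,J(t+u)\,du,
\]
with $J$ the pullback of the induced volume form on $V$. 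Injectivity of $D\varphi$ gives $|\varphi(t+u)-\varphi(t)|\ge c|u|$ uniformly for $t$ in a compact set and $|u|$ small, and the numerator is $O(|u|)$; hence the integrand is dominated by $C|u|^{-(2\nu-1)}$, which is integrable on $\R^{2(N-p)}$ precisely because $2\nu-1<2(N-p)$, i.e.\ $\nu\le N-p$. Continuity of $h$ follows by dominated convergence.

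For $C^k$ regularity I differentiate under the integral sign. The decisive observation is that $\partial_t$ applied to $\varphi(t+u)-\varphi(t)$ yields $D\varphi(t+u)-D\varphi(t)$, which by Taylor vanishes to order $1$ in $u$; more generally $D^j\varphi(t+u)-D^j\varphi(t)=O(|u|)$ for every $j$. Consequently every $t$-derivative of the kernel acquires a compensating factor $O(|u|)$ that offsets the additional $|w|^{-1}$ produced by differentiating $K_i$, and the chain-rule expansion of $\partial_t^\alpha K_i(\varphi(t+u)-\varphi(t))$ with $|\alpha|\le k$ stays bounded by $C|u|^{-(2\nu-1)}$. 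Together with $\xi\circ\varphi\in C^k$ this provides a common integrable dominant for each derivative of order $\le k$, justifying differentiation under the integral sign and yielding $h\in C^k(V)$. The only real obstacle is the combinatorial bookkeeping in this last step, which I would handle by induction on $|\alpha|$ using the Taylor identities above.
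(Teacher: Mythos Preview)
Your argument is correct. After flattening $V$ by a chart and substituting $u=s-t$, the key point is exactly the one you isolate: for every $j\ge 1$,
\[
\partial_t^{\,j}\bigl(\varphi(t+u)-\varphi(t)\bigr)=D^j\varphi(t+u)-D^j\varphi(t)=O(|u|),
\]
so by Fa\`a di Bruno each term in $\partial_t^{\alpha}K_i\bigl(\varphi(t+u)-\varphi(t)\bigr)$ has the form $(D^mK_i)(w)\cdot O(|u|^m)$ with $|D^mK_i(w)|\lesssim |w|^{-(2\nu-1)-m}\sim |u|^{-(2\nu-1)-m}$, giving the uniform bound $|u|^{-(2\nu-1)}$, which is integrable on $\R^{2(N-p)}$ precisely when $\nu\le N-p$. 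The density $(\xi\circ\varphi)(t+u)J(t+u)$ contributes only bounded factors under $\le k$ derivatives since $\xi\in C^k$ and $\varphi$ is smooth. One small point you should make explicit: the local bi-Lipschitz estimate $|\varphi(t+u)-\varphi(t)|\ge c|u|$ holds only on a sufficiently small chart, so the partition of unity must be chosen fine enough.

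The paper's (draft) proof takes a different route. It first writes
\[
\frac{\bar\zeta_i-\bar z_i}{|\zeta-z|^{2\nu}}=c\,\partial_{\zeta_i}\frac{1}{|\zeta-z|^{2\nu-2}}
\]
and integrates by parts on $V$ via Stokes, reducing the singularity by one order. Then, rather than flattening, it differentiates in $z$ using holomorphic vector fields $\L=a\!\cdot\!\partial$ tangential to $V$ and the symmetry identity: if $\alpha(\zeta,z)=\alpha(z,\zeta)$ vanishes to order $2\ell$ on the diagonal, so does $(\L_\zeta+\L_z)\alpha$. Writing $\L_z=-\L_\zeta+(\L_\zeta+\L_z)$, the $\L_\zeta$-piece is thrown back onto the density by integration by parts, while the symmetric piece yields a new numerator with one extra order of vanishing that compensates the extra $|\zeta-z|^{-2}$ in the denominator. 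Induction on the order of differentiation then gives $C^k$.

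The two arguments encode the same cancellation --- differentiating ``along the diagonal'' does not worsen the singularity --- but implement it differently: yours flattens $V$ and works in real-variable terms, which is elementary and uses nothing about the complex structure; the paper stays in ambient coordinates, exploits the $\partial$-operator and tangential holomorphic fields, and fits more naturally with the complex-analytic machinery used elsewhere in the paper.
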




\section{Proofs of Theorems~\ref{main},  \ref{maincor}, and \ref{main2}}

\begin{proof}[Proof of Theorem~\ref{main}]
If we  choose $g$ as the weight from Example~\ref{alba} then 
$\Pr \phi$ will vanish for degree reasons unless $\phi$ has bidegree
$(0,0)$, i.e., is a function, and in that case clearly
$\Pr \phi$ will be holomorphic for all $z$ in $\Omega'$.
Now Theorem~\ref{main} follows from 
\eqref{koppelman3} except for the asymptotic estimate
\eqref{asymp}.

After a slight regularization we may assume that
$\delta(z)$ is smooth on $X_{reg}$ or alternatively we can replace 
$\delta$ by  $|h|$ where 
$h$ is a tuple of functions in $\Omega$ such that $X_{sing}=\{h=0\}$,
by virtue of Lojasiewicz' inequality,
\cite{Loja} and \cite{Ma}.
In fact, there is a number $r\ge 1$ such that
\begin{equation}\label{loja}
(1/C) \delta^r(\zeta)\le |h(\zeta)|\le C\delta(\zeta).
\end{equation}

We have to estimate, cf., \eqref{gam69},  
\begin{equation}\label{puck}
\int_\zeta \omega(\zeta)\w\frac{\alpha(\zeta,z)}{|\eta|^{2n}}
\end{equation}
when $z\to X_{sing}$.
To this end we take a smooth approximand $\chi$ of $\chi_{[1/4,\infty)}(t)$
and write \eqref{puck}  as
\begin{multline*}
\int_\zeta \chi(\delta(\zeta)/\delta(z))
\omega(\zeta)\w \frac{\alpha(\zeta,z)}{|\eta|^{2n}}+ 
\int_\zeta\big(1-\chi(\delta(\zeta)/\delta(z))\big)
\omega(\zeta)\w \frac{\alpha(\zeta,z)}{|\eta|^{2n}}.
\end{multline*}
In the first integral,  $\delta(\zeta)\ge C \delta(z)$ and since
the integrand is
integrable we can use \eqref{omegauppsk} and get the estimate
$\lesssim \delta(z)^{-M}$ for some $M$.
In the second integral we use instead that
$\omega$ has some fixed finite order as a current so that its action can be estimates by
a finite number of derivatives of
$
(1-\chi(\delta(\zeta)/\delta(z)))\alpha(\zeta,z)/|\eta|^{2n},
$
which again is like $\delta(z)^{-M}$ for some $M$,  since
here $\delta(\zeta)\le\delta(z)/2$ and hence 
$
C|\eta|\ge|\delta(z)-\delta(\zeta)|\ge \delta(z)/2.
$
Thus \eqref{asymp} holds.
\end{proof}

\begin{proof}[Proof of Theorem~\ref{maincor}]
Suppose that $\nu$ is the order of the current $R$.
Since $\K\Phi$ basically is the current $R$ acting on
$\Phi$ times a smooth form, it is clear that the Koppelman
formula \eqref{koppelman3},  but with $\Phi$,    remains  true even if $\Phi$ is just of class
$C^{\nu+1}$ in a \nbh of $X$. For  instance, for given $\Phi$ in  $C^{\nu+1}$ 
this follows by approximating in $C^{\nu+1}$-norm by smooth forms.

It is a more delicate matter to check that 
$\K\Phi$ only depends on the pullback of $\Phi$ to $X$. 
The current  $R$ 
is (locally)  the push-forward, under a suitable modification
$\pi\colon Y\to \Omega$,  of a finite sum $\tau=\sum \tau_j$
where each $\tau_j$ is a  simple current of the form 
\begin{equation}\label{pudel}
\tau_j=\dbar\frac{1}{t_{j_1}^{a_{j_1}}}
\w\frac{\alpha_j}{t_{j_2}^{a_{j_2}}\cdots t_{j_r}^{a_{j_r}}},
\end{equation}
with  a  smooth form  $\alpha_j$. Since $R$ has the SEP with respect to $X$,
arguing as in \cite[Section~5]{AW2},
we can assume that the image of each of the  divisors $t_{j_1}=0$ is not 
fully contained in $X_{sing}$. Here is a sketch of a proof:
Write  $\tau=\tau'+\tau''$
where $\tau''$ is the sum of all $\tau_j$ such that the image of  $t_{1_j}=0$
is contained in $X_{sing}$. Let $\chi_\delta=\chi(|h|/\delta)$, where $h$ is a holomorphic
tuple that cuts out $X_{sing}$.  Then $\lim (\pi^*\chi_\delta)\tau''=0$ and $\lim
(\pi^*\chi_\delta) \tau'=\tau'$. Since $R=\pi_*\tau$ and $\lim \chi_\delta R=R$,
it follows that $R=\pi_*\tau'$.

Therefore, if  $i\colon X\to \Omega$ and $i^*\Phi=0$ on $X_{reg}$,
then the pullback of $\pi^*\Phi$ to $t_{1_j}=0$ must vanish.
If $\Phi$ is in $C^{L+1}$, where   $L$ is the maximal sum of the powers
in the denominators in \eqref{pudel}, 
it follows that $\Phi\w R=\pi_*(\pi^*\Phi\w\tau)=0$ and similarly $\dbar\Phi\w R=0$.
\end{proof}


\begin{proof}[Proof of Theorem~\ref{main2}]
We will  use an extra weight factor. 
In a slighly smaller domain $\Omega''\subset\subset\Omega$
we can find a holomorphic tuple $a$ 
such that $\{a=0\}\cap X\cap\Omega''=X_{sing}\cap\Omega''$.
Let  $H^a$ be a holomorphic $(1,0)$-form in $\Omega''\times\Omega''$
such that
$
\delta_\eta H^a=a(\zeta)-a(z).
$
If $\psi$ is a $(0,q)$-form that
vanishes in a \nbh of $X_{sing}$  we can incorporate a suitable power of
the weight  
\begin{equation}\label{viktig}
g_a =\frac{a(z)\cdot \bar a}{|a|^2}+H^a\cdot\dbar\frac{\bar a}
{|a|^2}
\end{equation}
in \eqref{koppelman3}; we  will use  the weight $g_a^{\mu+n}\w g$
instead of just  $g$,  the usual weight with respect to 
$\Omega'\subset\subset\Omega''\subset\subset\Omega$ that has
compact support and is holomorphic in $z$. For degree reasons,
the second term on the right hand side of \eqref{viktig} can occur
to the power at most $n$ when pulled back to $X$, and hence  
the associated kernel
$$
k^\mu(\zeta,z)=\gamma\lrcorner(H\w g^{\mu+n}_a\w g\w B)_{N,N-1}
$$
is like, cf., \eqref{gam}, 
$$
\omega(\zeta)\w \Big(\frac{a(z)\cdot \overline{a(\zeta)}}{|a(\zeta)|^2}\Big)^\mu\w\Ok(1/|\eta|^{2n-1}).
$$
The  operators in  Lemma~\ref{brum}
are  bounded on $L^p_{loc}$, so we have that 
\begin{equation}\label{sallan2}
\psi=\dbar \int_{X_{reg}} k^\mu(\zeta,z)\psi(\zeta) +
\int_{X_{reg}} k^\mu(\zeta,z)\w \dbar\psi(\zeta)
\end{equation}
for $(0,q)$-forms $\psi$,  $q\ge 1$, in $L^p(X_{reg})$
that vanish   in a \nbh of $X_{sing}$. 
If $\phi$  is as in Theorem~\ref{main2}, thus
 \eqref{sallan2} holds for
$\psi=\chi_\epsilon \phi$, where  $\chi_\epsilon= \chi(|a|^2/\epsilon)\phi$ 
and $\chi$ is a smooth approximand of the characteristic function for $[1,\infty)$.

If now  $\mu'\ge M+r+\mu r$, where $M$ is as in \eqref{omegauppsk}
and $r$ as in \eqref{loja},  noting that 
$\dbar\chi_\epsilon\sim 1/|a|$,
it follows that
$$
\int \dbar\chi_\epsilon\w k^\mu\w\phi
$$
tends to zero in $L^p$ when $\epsilon\to 0$ if $\delta^{-\mu'}\phi\in L^p$.
Therefore
$$
u=\int_{X_{reg}} k^\mu(\zeta,z)\w\phi(\zeta)
$$
is a solution such that   $\delta^{-\mu}u\in L^p$.
\end{proof}

\section{Solutions with compact support}\label{compsupp}

The proofs of Theorems \ref{portensats},  \ref{nyporten}, and \ref{main3}
relay on on the 
possibility to solve the $\dbar$-equation with compact support.
To begin with, assume that $X, X',\Omega, \Omega'$  are as in Theorem~\ref{main} and
let $f\in \A_{q+1}(X)$ be $\dbar$-closed and with  support in $X'$.
Choose a resolution \eqref{acomplex} of $\Ok^X=\Ok^\Omega/\J$ in (a slightly smaller set)
$\Omega$ that ends at level $M=N-\nu$ where $\nu$ is the minimal depth of $\Ok^X_{x}$.
Let $\tilde\chi$ be a cutoff function with support in $\Omega'$ that is
identically $1$ in a \nbh of the support of $f$, and
let $g$ be the weight from
Example~\ref{alba}  with this choice of $\tilde\chi$ but with 
$z$ and $\zeta$ interchanged.  
This weight does not have
compact support with respect to $\zeta$, but  since $f$ has compact
support itself we still  have the Koppelman formula \eqref{koppelman3}.
(The one who is worried can include  an extra weight factor with compact support
that is identically $1$ in a \nbh of $\supp\dbar\tilde\chi$;  we are then formally back to the situation
in Proposition~\ref{koppelmanthm}.)
Clearly 
$$
v(z)=\int (HR\w g\w B)_{N,N-1}\w  f
$$
is in $\A_q(X')$ and has support in a \nbh of the support of $f$, 
and it follows from \eqref{koppelman3}  that it is indeed a solution if 
the associated integral $\Pr f$ vanishes. However, since now $\sigma$
is holomorphic in $\zeta$,  for degree reasons we have that
\begin{equation}\label{ulv}
\Pr f(z)=
\pm \dbar\tilde{\chi}(z)\w \int HR_{N-q-1}\w
\sigma \w (\dbar \sigma)^q\w f.
\end{equation}
If  $q\le \nu-2$, this integral vanishes since then $N-q-1\ge N-\nu+1$
so that   $R_{N-q-1}=0$.
If $q=\nu-1$, then $\Pr f(z)$ vanishes if  
\begin{equation}\label{alla}
\int R_{N-q-1}\w d\zeta_1\w\ldots\w d\zeta_N \w f  h=
\pm \int_{X}f\w h \omega_{n-\nu}=0
\end{equation}
for all $h\in\Ok(X')$,  and by approximation
it is enough to assume that \eqref{alla}  holds for $h\in\Ok(X)$.

\begin{remark}
The condition  \eqref{alla} is necessary: Indeed if there is a solution $v\in\A_q(X')$ with compact
support, then since $\dbar \omega_{n-\nu}=0$ in  $X'$ we have that
$$
\int_{X} f\w  h\omega_{n-\nu}=\pm \int_X \dbar v\w h\omega_{n-\nu}=0,
$$
since $\dbar(v\omega_{n-\nu})=\dbar v\w\omega_{n-\nu}$. This in turn holds, since
$
\nabla_f (v\w\omega)=-\dbar v\w\omega,
$
which directly follows from the definition of $v$ being in $\A\subset \Dom\dbar_X$. 
\end{remark}

\begin{proof}[Proof of Theorem~\ref{portensats}]
Since $X$ can be exhausted by holomorphically convex subsets each of which
can be embedded in some affine space, we can assume from the beginning that
$X\subset\Omega\subset\C^N$, where $\Omega$ is holomorphically convex (pseudoconvex).
Let $\Omega'\subset\subset\Omega$ be a holomorphically convex open set in $\Omega$ that
contains  $K$. Let $\chi$ be a cutoff function with support in $\Omega'$ that is
$1$ in a \nbh of $K$ and let $f=\dbar\chi\w\phi$.
Then $(1-\chi)\phi$ is a smooth function in $X$ that
coincides with $\phi$ outside a \nbh of $K$.
As we have seen above, one can find a $u\in\A_0(X)$ with 
support in $X'$ such that $\dbar u=f$ if either
$\nu\ge 2$ or \eqref{alla}, i.e., \eqref{moment},  holds.

Since  $X_{sing}$ is not contained in $K$, our solution
$u$ is, outside of $K$, only smooth  on $X_{reg}$.
Therefore
$\Phi=(1-\chi)\phi+u$ is holomorphic in $X_{reg}$, in a
\nbh of $K$, and outside $\Omega'$. Since
$X_{reg}^\ell\setminus K$ is connected, $\Phi=\phi$ 
there.  
(It follows directly that $\Phi$ is in $\Ok(X)$, since it is in $\A_0(X)$
and $\dbar\Phi=0$.)
\end{proof}

\begin{ex}\label{hartogsex}
Let $X\subset \C^2$ be an irreducible curve with one transverse self intersection at $0\in \C^2$.
Close to $0$, $X$ has two irreducible components, $A_1$, $A_2$, each isomorphic to a disc in $\C$.
Let $K\subset A_1$ be a closed annulus surrounding the intersection point $A_1\cap A_2$. Then 
$X\setminus K$ is connected but $X_{reg} \setminus K$ is not. Denote 
the ``bounded component'' of $A_1\setminus K$ by $U_1$ and put $U_2=X\setminus (K\cup U_1)$.
Let $\tilde{\phi}\in \mathcal{O}(X)$ satisfy $\tilde{\phi}(0)=0$ and define $\phi$ to be $0$
on $U_1$ and equal to $\tilde{\phi}$ on $U_2$. Then $\phi\in \mathcal{O}(X\setminus K)$ and a 
straight
forward verification shows that $\phi$ satisfies the compatibility condition \eqref{moment}.
However, it is clear that $\phi$ cannot be extended to a strongly holomorphic function on $X$.     
\end{ex}

\begin{proof}[Proof of Theorem~\ref{nyporten} and Corollary~\ref{nyportencor}]
Theorem~\ref{nyporten} is proved in pretty much the same way as Theorem~\ref{portensats}. 
Again we can assume that
$X\subset\Omega\subset\C^N$.
Again take $\chi$ that is $1$ in a \nbh of $K$ and with compact support in $X'$.
There is then a solution in $\A_q(X')$ to $\dbar u=\dbar\chi\w \phi$ with support in $X'$
if $q\le \nu-2$ or $q=\nu-1$ and \eqref{alla}, i.e., \eqref{moment2} holds.
Thus $\Phi=(1-\chi) \phi +u$ is in $\A_q(X)$, $\dbar\Phi=0$, and
$\Phi=\phi$ outside $X'$.

\smallskip
Let us now consider the corollary. We may assume that 
$$
K\subset\cdots X_{\ell+1}\subset\subset X_{\ell}\subset\subset \cdots X_0\subset\subset X,
$$
where all $X_\ell$ are  Stein spaces.
It follows from Theorem~\ref{nyporten} that for each $\ell$ there is a
$\dbar$-closed $\Phi_\ell\in\A_q(X)$  that coincides with $\phi$ outside
$X_\ell$, if $q\le \nu-2$ or $q=\nu-1$ and \eqref{moment2} holds.
From the exactness of \eqref{resolution}   we have  $u'_\ell\in\A_{q-1}(X)$ such that
$\dbar u_\ell'=\Phi_\ell$. Since $\dbar (u'_\ell-u'_{\ell+1})=0$ outside
$X_\ell$, there is a $\dbar$-closed $w_\ell\in\A_{q-1}(X)$ such that
$w_\ell=u'_\ell-u'_{\ell+1}$ outside $X_\ell$ (or at least outside $X_{\ell-1}$).
If we let $u_k=u_k'-(w_1+\cdots +w_{k-1})$ then $u=\lim u_k$ exists
and solves $\dbar u=\phi$ in $X\setminus K$.
\end{proof}

One can show directly that the conditions  \eqref{moment} and \eqref{moment2}
are  independent of the choice of metrics on $E_\bullet$:
Let $R'$ and $R$ be the currents correspondning to two different metrics. With the notation in the
proof of Theorem~4.1 in \cite{AW1} we have  
\begin{equation}\label{snara}
\nabla_f M=R-R',
\end{equation}
where
$
M=\dbar|F|^{2\lambda}\w u'\w u|_{\lambda=0}.
$
It follows as in this proof that, outside $X_{sing}$,  $M=\beta R_{N-n}$ where
$\beta$ is smooth. Following the proof of Proposition~16 in \cite{AS2}, we find that
in  fact $M\w dz=i_* m$, where $m=\beta\omega_0$ outside $X_{sing}$. However, $\beta$
is a sum of terms like 
$$(\dbar\sigma'_{n-\nu})\cdots(\dbar\sigma'_{r+1})\sigma'_r
(\dbar\sigma_{r-1})\cdots (\dbar\sigma_{N-n+1}),
$$
it is therefore
almost semimeromorphic on $X$, and thus  $m=\beta\omega_0$.
Moreover, as in the proof of the main lemma \cite[Lemma~27]{AS2} it follows
that $\dbar\chi_\delta\w\beta\w\omega_0\w \phi\to 0$ when
$\delta\to 0$  if $\phi$ is in $\A$. Therefore,
$$
\int_X\dbar m^{N-n}\w \phi\w h =\lim_{\delta\to 0} \int_X  \chi_\delta \dbar m^{N-n}\w \phi\w h 
=
\pm \lim_{\delta\to 0} \int_X  m\w\dbar\chi_\delta\w \phi\ h=0.
$$
From \eqref{snara} we have that $\dbar M_{N-\nu}=R'_{N-\nu}-R_{N-\nu}$ and hence 
$\dbar m_{n-\nu}=\omega'_{n-\nu}-\omega_{n-\nu}$.
We thus have that \eqref{moment2} holds with
$\omega_{n-\nu}$ if and only it holds with $\omega'_{n-\nu}$.

\begin{remark}\label{buffel}  
The proofs above for part (i) of the theorems can be seen as  concrete
realizations  of abstract arguments.
There is a  long exact sequence
\begin{multline*}
0\to H^0_K(X,\Ok)\to H^0(X,\Ok)\to H^0(X\setminus K,\Ok)\to \\
\to H^1_K(X,\Ok)\to H^1(X,\Ok)\to H^1(X\setminus K,\Ok)\to \cdots.
\end{multline*}
Since $X$ is Stein, $H^k(X,\Ok)=0$ for $k\ge 1$.  Thus 
$H^0(X,\Ok)\to H^0(X\setminus K,\Ok)$ is surjective if $H^1_K(X,\Ok)=0$,
and in the same way, for $q\ge 1$, we have that
$H^q(X\setminus K,\Ok)=0$ if (and only if) $H^{q+1}_K(X,\Ok)=0$.
\end{remark}

We now consider   $X\setminus A$ where $X$ is Stein and
$A$ is an analytic subset of positive codimension.
For convenience  we first  consider  the technical part
concerning  local solutions  with compact support.

\begin{prop}\label{badanka3}
Let $X$ be an analytic set defined in a neighborhood of the
closed unit ball $\bar{\B}\subset\C^N$,
$A$ an analytic subset of $X$, and 
let $x\in A$, and let $a$ be a holomorphic tuple such that
$A=\{a=0\}$ in a neighborhood of $x$ and let $d=\dim A$.
Assume that $f$ is in $\A_{q+1}$ in a \nbh of $x$, $\dbar f=0$, and that
$f$ has support in $\{|a|<t\}$ for some small $t$.
(We may assume that  $f=0$ close to $A$.)

\smallskip

\noindent (i)  If $0\leq q \leq \nu-d-2$, then one can find, in a neighborhood $U$ of $x$,
a $(0,q)$-form  $u$ in $\A_{q}$  with support in $\{|a|<t\}$
such that  $\dbar u =f$ in $X\setminus A \cap U$.

\smallskip

\noindent (ii)  If $0\le q=\nu-d-1$, then one can find such a solution if and only if
\begin{equation}\label{eqmoment}
\int_{X}f\w h\w \omega_{n-\nu}=0
\end{equation}
for all smooth $\dbar$-closed $(0,d)$-forms  $h$  such that
$\supp  h \cap \{|a|\leq t\}$ is compact and contained in the set where
$\dbar f=0$.
\end{prop}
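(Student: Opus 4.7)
The approach mirrors those of Theorems~\ref{portensats} and \ref{nyporten}: I would construct a Koppelman decomposition
\[
f(z)=\dbar Kf(z)+K(\dbar f)(z)+\Pr f(z)=\dbar Kf(z)+\Pr f(z),\quad z\in X_{reg}\cap U,
\]
valid on a small enough neighborhood $U$ of $x$, arranged so that $Kf\in\A_q$ has the required $z$-support in $\{|a|<t\}$, and then show $\Pr f=0$ precisely under the stated hypotheses.

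For the setup I would combine two weights in Proposition~\ref{koppelmanthm}. First, the cutoff weight $g$ of Example~\ref{alba} with $z$ and $\zeta$ interchanged (as in the proofs of Theorems~\ref{portensats}--\ref{nyporten}), where the cutoff $\tilde\chi(z)$ is smooth, supported in $\{|a(z)|<t\}$, and identically $1$ on a neighborhood of $\supp f$; because $g$ vanishes for $z\notin\supp\tilde\chi$, this gives $Kf$ the required $z$-support in $\{|a|<t\}$. Second, the weight $g_a^{\mu+n}$ of \eqref{viktig} built from the tuple $a$, with $\mu$ large enough to dominate the order of $R$ along $A$ (as in the proof of Theorem~\ref{main2}); since $f$ vanishes near $A$, the $\zeta$-pole of $g_a$ is harmless. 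Proposition~\ref{koppelmanthm} applied to the product weight $g_a^{\mu+n}\w g$, together with $\dbar f=0$, gives the displayed decomposition, and $Kf\in\A_q$ by Remark~\ref{adef}.

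The main obstacle is the analysis of $\Pr f$. Since $g$ is holomorphic in $\zeta$, the projection kernel $(H\w g_a^{\mu+n}\w g)_{N,N}$ collapses, as in \eqref{ulv}, to a term proportional to $\dbar\tilde\chi(z)$ and involving components of $R$. The extra weight $g_a^{\mu+n}$, through the Bochner--Martinelli-like structure of its $(1,1)$-part along the $d$-dimensional set $A$, shifts the relevant component of $R$ from $R_{N-q-1}$ (as in Theorem~\ref{portensats}) to $R_{N-q-1-d}$; verifying this degree shift is the key technical step. Once this is in place, the conclusion is immediate: when $q\le\nu-d-2$ we have $N-q-1-d\ge N-\nu+1>M$, so $R_{N-q-1-d}=0$ and $\Pr f=0$, proving (i); when $q=\nu-d-1$ the surviving component is precisely $R_{N-\nu}$, which by \eqref{DLrep}--\eqref{gam} is represented by $\omega_{n-\nu}$, and pairing $\Pr f$ against the smooth $\dbar$-closed $(0,d)$-form $h$ in the statement (whose bidegree fits exactly with $\omega_{n-\nu}$ to produce the top form on $X$) shows that the vanishing of $\Pr f$ is equivalent to the moment equality \eqref{eqmoment}. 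Necessity of \eqref{eqmoment} then follows by the integration-by-parts argument of the remark after \eqref{alla}:
\[
\int_X f\w h\w\omega_{n-\nu}=\int_X\dbar u\w h\w\omega_{n-\nu}=\pm\int_X\dbar(u\w h\w\omega_{n-\nu})=0,
\]
using $\dbar\omega_{n-\nu}=0$, $\dbar h=0$, Stokes, and the compact support of $u\w h$ in $\{|a|<t\}$.
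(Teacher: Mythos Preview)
The central step you flag as ``the key technical step'' --- that the weight $g_a^{\mu+n}$ of \eqref{viktig} forces a shift of the relevant residue component from $R_{N-q-1}$ to $R_{N-q-1-d}$ --- does not hold. The weight $g_a$ has a nontrivial $(0,0)$-component $a(z)\cdot\bar a(\zeta)/|a(\zeta)|^2$, which carries no $d\bar\zeta$ at all; hence $g_a^{\mu+n}$ can contribute zero to the $d\bar\zeta$-degree of the projection kernel, and in $(HR\w g_a^{\mu+n}\w g)_{N,N}\w f$ one still picks up the term involving $R_{N-q-1}$, exactly as in the proof of Theorem~\ref{portensats}. No improvement over the range $q\le\nu-2$ results. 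The higher components of $g_a$ do not help either: its $(1,1)$-part is $\sum_j H^a_j\,\dbar(\bar a_j/|a|^2)$, indexed by the components of the tuple $a$ cutting out $A$, so the anti-holomorphic directions it produces are tied to the \emph{codimension} of $A$, not to $d=\dim A$. There is simply no mechanism here forcing a mandatory $d\bar\zeta$-contribution of size $d$.

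The paper obtains the needed degree bound by a different device. After choosing local coordinates $z=(z',z'')$ near $x$ with $z'\in\C^d$ and $A\subset\{|z''|\le|z'|\}$, one takes the weight $g$ of Example~\ref{alba} built \emph{only from the $d$ variables} $(z',\zeta')$. By construction this $g$ has $d\bar\zeta$-degree at most $d$, and it also has compact support in $\zeta'$; combined with $\supp f\subset\{|a|<t\}$ this yields the compact $\zeta$-support needed for the Koppelman formula --- a point your sketch does not address. The $z$-support in $\{|a|<t\}$ is handled by a separate weight
\[
g^a=\chi_a(z)-\dbar\chi_a(z)\w\frac{\sigma_a}{\nabla_\eta\sigma_a},\qquad
\sigma_a=\frac{\overline{a(z)}\cdot H^a}{|a(z)|^2-a(\zeta)\cdot\overline{a(z)}},
\]
and since $\sigma_a$ is holomorphic in $\zeta$, this $g^a$ contributes zero $d\bar\zeta$-degree. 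The count then reads: $HR$ contributes at most $N-\nu$, $g$ at most $d$, $g^a$ nothing, so the kernel has $d\bar\zeta$-degree at most $N-\nu+d$; with $f$ contributing $q+1$, the integrand cannot reach degree $N$ in $d\bar\zeta$ when $q\le\nu-d-2$, whence $\Pr f=0$. When $q=\nu-d-1$ only $R_{N-\nu}$ survives, paired against the $\dbar$-closed top term $g_d$, which is exactly the form of the moment condition \eqref{eqmoment}. Your integration-by-parts argument for necessity is fine.
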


\begin{proof}
Let $\chi_a$ be a cutoff function in $\B$, which in a neighborhood of $x$
satisfies that $\chi_a=1$ in a neighborhood of the support of $f$ and
$\chi_a=0$ in a neighborhood of $\{|a|\geq t\}$. 
Close to $x$ we can choose coordinates
$z=(z',z'')=(z_1',\ldots,z'_{d},z_1'',\ldots,z_{N-d}'')$ centered at $x$ so
that
$A \subset \{|z''|\leq |z'|\}$.
Let 
$H^a$ be a holomorphic $(1,0)$-form, as in the proof of Theorem~\ref{main2},
and define 
\begin{equation*}
g^a=\chi_a(z)-\dbar\chi_a(z)\wedge \frac{\sigma_a}{\nabla_{\eta}\sigma_a},\,\,\,
\sigma_a=\frac{\overline{a(z)}\cdot H^a}{|a(z)|^2-a(\zeta)\cdot \overline{a(z)}}.
\end{equation*}
Then $g^a$ is a smooth weight for $\zeta$ on the support of $f$.
Since $f$ is supported close to
$A$ we can choose a function $\chi=\chi(\zeta')$, which is $1$
close to
$x$ and such that $f\chi$ has compact support. Let 
$g=\chi-\dbar\chi\wedge \sigma/\nabla_{\eta}\sigma$ be the weight from
Example 2 but built from $z'$ and $\zeta'$. Our Koppelman formula now gives
that
\begin{equation*}
u=\K f=\int (HR\wedge g^a \wedge g\wedge B)_{N,N-1}\wedge f
\end{equation*}
has the desired properties provided that the obstruction term
\begin{equation*}
\Pr f=\int (HR\wedge g^a\wedge g)_{N,N}\wedge f
\end{equation*}
vanishes. Since $g$ is built from $\zeta'$, $g$ has at most degree $d$ in
$d\bar{\zeta}$. Moreover,
$HR$ has at most degree $N-\nu$ in $d\bar{\zeta}$ and
$g^a$ has no degree in $d\bar{\zeta}$. Thus, if $q\le\nu-d-2$, then
$(HR\wedge g^a\wedge g)_{N,N}\wedge f$ cannot have degree $N$ in $d\bar{\zeta}$
and so $\Pr f=0$ in that case. This proves (i).
If $q=\nu-d-1$, then 
\begin{equation*}
\Pr f=\dbar\chi_a(z)\w \int HR_{N-\nu}\wedge g_{d}\wedge \sigma_a\w(\dbar\sigma_a)^q f. 
\end{equation*}
Now, $H^a$ depends holomorphically on $\zeta$ and $g_{d}$ is
$\dbar$-closed since it is the top degree term of a weight. Also, $g$ has
compact support in the $\zeta'$-direction, so $\mbox{supp}(g)\cap\{|a|\leq
t\}$
is compact and thus $\Pr f=0$ if \eqref{eqmoment} is fulfilled.
On the other hand, it is clear that the existence of a solution
with support in $\{|a|<t\}$ implies \eqref{eqmoment}.
\end{proof}


\begin{proof}[Proof of Theorem~\ref{main3}] 
Arguing as in the proof of Corollary~\ref{nyportencor} above, we can conclude
from Proposition~\ref{badanka3}: {\it Given a point $x$ there is a \nbh
$U$ such that if $\phi\in\A_q(U\cap X\setminus  A)$ is  $\dbar$-closed,
$0\le q\le\nu-d-2$ or  $0\le q=\nu-d-1$ and \eqref{moment3} holds, 
$\phi$ is strongly holomorphic if $q=0$ and exact in
$X\setminus A\cap U'$,  for a  possibly
slightly smaller \nbh $U'$ of $x$,  if $q\ge 1$.}

\smallskip
We define the analytic sheaves
$\F_k$ on $X$ by 
$\F_k(V)=\A_k(V\setminus A)$ for open sets $V\subset X$. Then
$\F_k$ are fine sheaves and 
\begin{equation}\label{karv}
0\to\Ok_X \to\F_0\stackrel{\dbar}{\longrightarrow}\F_1
\stackrel{\dbar}{\longrightarrow}\F_2\stackrel{\dbar}{\longrightarrow} \cdots
\end{equation}
is exact for $k\le\nu-d-2$.
It follows that 
$$
H^k(X,\Ok_X)=\frac{\Ker_{\dbar} \F_k(X)}
{\dbar \F_{k-1}(X)}
$$
for $k\le \nu-d-2$. Hence Theorem~\ref{main3} follows for $q\le\nu-d-2$. 
If $q=\nu-d-1$ and \eqref{moment3} holds, then $\phi$ is in the image of
$\F_{q-1}\to\F_q$, and then the result follows as well.
\end{proof}

\section{Examples}\label{exsec}

We have already seen that if $X$ is smooth, then
$\omega_k$ is just a smooth $(n,k)$-form, and
$\omega_0$ is non-vanishing.  At least semi-globally we can choose
$\omega=\omega_0$, and then $\omega_0$ is holomorphic.

Let now $X=\{h=0\}\subset \B\subset \C^{n+1}$,   $h\in \mathcal{O}(\bar{\B})$, 
be a hypersurface in the unit ball in $\C^{n+1}$ and assume that $0\in X$.
The depth (homological codimension) of $\Ok^X_x$ equals $\mbox{dim}\, X=n$ for all $x\in X$.
The residue current associated with $X$ is simply $R=R_1=\dbar (1/h)$ and so 
by the Poincare-Lelong formula \eqref{pl}
\begin{equation*}
R\wedge d\zeta = \dbar \frac{1}{h} \wedge d\zeta = 
\dbar \frac{1}{h}\wedge \frac{dh}{2\pi i}\wedge \tilde{\omega}=
\tilde{\omega}\wedge [X],
\end{equation*}
where, e.g., 
\begin{equation*}
\tilde{\omega}= 2\pi i \sum_{j=1}^{n+1}(-1)^{n-1}\frac{\overline{(\partial h/\partial \zeta_j)}}{|dh|^2}
d\zeta_1\wedge \cdots \wedge \widehat{d\zeta_j}\wedge \cdots \wedge d\zeta_{n+1}.
\end{equation*}
The structure  form associated with $X$ then is $\omega=i^* \tilde{\omega}$, 
where $i\colon X\hookrightarrow \B$.
Alternatively, we can write $R=\gamma \lrcorner [X]$, and thus
\begin{equation}\label{pro}
\omega=\pm i^*(\gamma\lrcorner d\zeta_1\w\ldots\w d\zeta_{n+1}),
\end{equation}
for 
\begin{equation}\label{qwer}
\gamma = -2\pi i\sum_{j=1}^{n+1} \frac{\overline{(\partial h/\partial \zeta_j)}}
{|dh|^2} \frac{\partial}{\partial \zeta_j}.
\end{equation}

Let $K=\{0\}\subset X$ and let $\phi\in \mathcal{A}_q(X\setminus K)$ be $\dbar$-closed. 
Since $\nu=n$ it follows from Theorem~\ref{nyporten} and Corollary~\ref{nyportencor}
that $\phi$ has a $\dbar$-closed extension in $\A_q(X)$ and is $\dbar$-exact in $X\setminus K$ if $q\leq n-2$,
or if $q=n-1$ and \eqref{moment2} holds. Let us consider \eqref{moment2} in our special case; 
assume therefore that $q=n-1$.
The function $\chi$ in \eqref{moment2} may be any 
smooth function that is $1$ in a neighborhood of $K$ and has compact support in $\B$. 
Via Stokes' theorem, or a simple limit procedure, we can write the condition \eqref{moment2} as
\begin{equation}\label{stu}
0=\int_{X\cap\partial\B_\epsilon} \omega\w\phi \xi,\quad  \xi\in \mathcal{O}(\B),
\end{equation}
where $\omega$ is given by \eqref{pro} and \eqref{qwer}.

In case $X=\{\zeta_{n+1}=0\}$ we have $\omega=\pm 2\pi i d\zeta_1\wedge \cdots \wedge d\zeta_n$ and 
\eqref{stu} reduces to the usual condition for $\phi$ having a $\dbar$-closed extension across $0$.
Let instead $X=\{\zeta_1^r-\zeta_2^s=0\}\cap \B\subset \C^2$, where $2\leq r < s$ are relatively prime integers. 
Then $\tau \mapsto (\tau^s,\tau^r)$ is the normalization of $X$. We have
\begin{equation*}
\gamma =-2\pi i\frac{r\bar{\zeta}_1^{r-1}\partial/\partial\zeta_1 -
s\bar{\zeta}_2^{s-1}\partial/\partial\zeta_2}{r^2|\zeta_1|^{2(r-1)}+s^2|\zeta_2|^{2(s-1)}},
\end{equation*}
and it is straightforward to verify that $\omega=2\pi i d\tau/\tau^{(r-1)(s-1)}$.
Let $\phi$ be holomorphic on $X\setminus\{0\}=X_{reg}$. 
Then, cf., \eqref{stu}, $\phi$ has a (strongly) holomorphic extension
to $X$ if and only if 
\begin{equation*}
\int_{|\tau|=\epsilon}\phi \xi \, d\tau/\tau^{(r-1)(s-1)}=0, \quad \xi \in \mathcal{O}(X).
\end{equation*}

\section{Proof of Theorem~\ref{mainglobal}}\label{mg}

We now turn our attention to the proof of Theorem~\ref{mainglobal}.
We first assume that $X$ is a subvariety of some domain $\Omega$ in $\C^N$.
A basic problem with the globalization is that we cannot assume that there
is one single resolution \eqref{acomplex} of $\Ok/\J$ in the whole
domain $\Omega$. We therefore must patch together local solutions. To this end we will
use Cech cohomology. Recall that if $\Omega_j$ is an open cover of $\Omega$, then
a $k$-cochain $\xi$ is a formal sum
$$
\xi=\sum_{|I|=k+1}\xi_I\w\epsilon_I
$$
where $I$ are multi-indices and $\epsilon_j$ is a nonsense basis, cf., e.g.,
\cite[Section~8]{A1}. Moreover, in this language
the coboundary operator $\rho$ is defined as
$\rho \xi = \epsilon\w \xi$,
where $\epsilon=\sum_j \epsilon_j$.

If $g$ is a weight as in Example~\ref{alba}  and $g'=(1-\chi)\sigma/\nabla_\eta \sigma$, then
\begin{equation}\label{gprim}
\nabla_\eta g'=1-g.
\end{equation}

Notice that the relations \eqref{Hdef} for the  Hefer morphism(s) 
can be written simply as
$$
\delta_\eta H=Hf-f(z)H=Hf
$$
if $z\in X$.

\begin{proof}[Proof of Theorem~\ref{mainglobal} in case $X\subset\Omega\subset \C^N$] 
Assume that $\phi$ is in $\W(X)\cap\Dom\dbar_X$, smooth on $X_{reg}$, and that
$\dbar\phi=0$.
Let $\Omega_j$ be a locally finite  open cover of $\Omega$  with convex polydomains
(Cartesian products of convex domains in each variable), and for each $j$ let 
$g_j$ be   a  weight with support in a slightly larger convex
polydomain  $\tilde\Omega_j\supset\supset\Omega_j$
and holomorphic in $z$ in a \nbh  of $\overline\Omega_j$.
Moreover, for each $j$ suppose that we have a given resolution \eqref{acomplex}
in $\tilde\Omega_j$, 
a choice of Hermitian metric, a choice of Hefer morphism, and let  $(HR)_j$ be   the
resulting current.  Then, cf., Remark~\ref{zoran} above, 
\begin{equation}\label{locsol}
u_j(z)=\int \big((HR)_j\w g_j\w B\big)_{N,N-1}\w\phi
\end{equation}
is a solution in $\Omega_j\cap X_{reg}$ to $\dbar u_j=\phi$.
We will prove that $u_j-u_k$ is (strongly) holomorphic on $\Omega_{jk}\cap X$
if $q=1$ and $u_j-u_k=\dbar u_{jk}$ on $\Omega_{jk}\cap X_{reg}$ if
$q>1$, and more generally:

\smallskip
\noindent {\bf Claim I\ } {\it Let $u^0$ be the $0$-cochain $u^0=\sum u_j\w\epsilon_j$. For each
$k\le q-1$ there is  a $k$-cochain of $(0,q-k-1)$-forms on $X_{reg}$ such that
$\rho u^k=\dbar u^{k+1}$ if $k<q-1$ and  $\rho u^{q-1}$ is a 
(strongly) holomorphic $q$-cocycle.} 
\smallskip

The holomorphic $q$-cocycle $\rho u^{q-1}$ defines a class in $H^q(\Omega,\Ok/\J)$ and
if $\Omega$ is pseudoconvex this class  must vanish,
i.e., there is a holomorphic $q-1$-cochain $h$ such that $\rho h=\rho u^{q-1}$.
By standard arguments this yields a global solution to $\dbar\psi=\phi$. For instance,
if $q=1$ this means that we have holomorphic functions $h_j$ in $\Omega_j$ such that
$u_j-u_k=h_j-h_k$ in $\Omega_{jk}\cap X$. It follows that $u_j-h_j$ is a global
solution in $X_{reg}$. 

\smallskip

We thus have to prove Claim~I.
To begin with we assume that we have a fixed resolution with a fixed metric 
and Hefer morphism;  thus 
a fixed choice  of current $HR$.  Notice that if
$$
g_{jk}=g_j\w g_k'-g_k\w g_j',
$$
cf., \eqref{gprim}, then 
$$
\nabla_\eta g_{jk}=g_j-g_k
$$
in $\tilde\Omega_{jk}$. With $g^\lambda$ as in Section~\ref{koppsec}, 
and in view of \eqref{bm},  we have 
$$
\nabla_\eta(g^\lambda\w g_{jk}\w  B)=
g^\lambda\w g_j\w B-g^\lambda\w g_k\w B - g^\lambda\w  g_{jk} 
+ g^\lambda\w g_{jk}\w [\Delta].
$$
However, the last term must vanish since $[\Delta]$ has full degree in
$d\eta$ and $g_{jk}$ has at least degree $1$. Therefore
$$
-\dbar(g^\lambda\w g_{jk}\w  B)_{N,N-2}= (g^\lambda\w g_j\w B)_{N,N-1}-
(g^\lambda\w g_k\w B)_{N,N-1} -(g^\lambda\w g_{jk})_{N,N-1}
$$
and as in  Section~\ref{koppsec}  we can take $\lambda=0$ and get, 
assuming  that $\dbar\phi=0$  and arguing as in Remark~\ref{zoran}, 
\begin{equation}\label{skillnad}
u_j-u_k=
\int (HR\w g_{jk})_{N,N-1}\w \phi +\dbar_z\int (HR\w g_{jk}\w B)_{N,N-2}\w \phi.
\end{equation}
Since $g_{jk}$ is holomorphic in $z$ in $\Omega_{jk}$ it follows that
$u_j-u_k$ is  (strongly)  holomorphic in $\Omega_{jk}\cap X$ if $q=1$ and
$\dbar$-exact on $\Omega_{jk}\cap X_{reg}$  if $q>1$.


\smallskip
\noindent {\bf Claim II\ }
{\it Assume that we have a fixed resolution but different choices of
Hefer forms and metrics and thus different
$a_j=(HR)_j$ in $\tilde\Omega_j$. Let $\epsilon'_j$ be a nonsense basis.
If
$A^0=\sum a_j\w\epsilon'_j$, then for each $k>0$ there is a $k$-cochain 
$$
A^k=\sum_{|I|=k+1}A_I\w\epsilon_I', 
$$
where  $A_I$ are currents on $\tilde\Omega_I$ with support
on $\tilde\Omega_I\cap X$ and holomorphic in $z$ in $\Omega_I$, such that
\begin{equation}\label{bus3}
\rho'A^k=\epsilon'\w A^k=\nabla_\eta A^{k+1}.
\end{equation}
Moreover, 
\begin{equation}\label{zoran2}
\dbar\chi_\delta \w\phi\w A^k\to 0,\quad \delta\to 0.
\end{equation}}
\smallskip

For the last statement we use that $X$ is Cohen-Macaulay.
\smallskip

In particular we have currents 
$a_{jk}$  with support on $X$ and such that
$
\nabla_\eta a_{jk}=a_j-a_k
$
in $\tilde\Omega_{jk}$.
If
$$
w_{jk}=a_{jk}\w g_j\w g_k+a_j\w g_j\w g_k'-a_k\w g_k\w g_j',
$$
then
$$
\nabla_\eta w_{jk}=a_j\w g_j-a_k\w g_k.
$$
Notice that $w_{jk}$ is a globally defined current.
By a similar argument as above (and via a suitable limit process),
cf., Remark~\ref{zoran} and \eqref{zoran2},  one gets that
$$
u_j-u_k=
\int(w_{jk})_{N,N-1}\w\phi+\dbar_z\int(w_{jk}\w B)_{N,N-2}\w \phi
$$
in $\Omega_{jk}\cap X_{reg}$ as before.
In general  we put 
$$
\epsilon'=g=\sum g_j\w\epsilon_j.
$$ 
If, cf.,  \eqref{gprim},
$$
g'=\sum g'_j\w\epsilon_j
$$
then
$$
\nabla_\eta g'=\epsilon-g=\epsilon-\epsilon'.
$$
If $a_I$ is a form on $\tilde\Omega_I$, then
$a_I\w\epsilon'_I$ is a well-defined global form. 
Therefore $A$, and hence also 
$$
W=A\w e^{g'},
$$ 
i.e., $W^k=\sum_j A^{k-j} (g')^{j}/j!$,
has globally defined coefficients and
$$
\rho W=\nabla_\eta W.
$$
In fact, since $A$ and $g'$ have even degree, 
$$
\nabla_\eta(A\w e^{g'})=\epsilon'\w A\w e^{g'}+
A\w e^{g'}\w (\epsilon-\epsilon')=\epsilon\w A\w e^{g'}.
$$
By the yoga above  the $k$-cochain 
$$
u^k=\int (W^k\w B)_{N,N-k-1}\w\phi
$$
satisfies
$$
\rho u^k=\dbar_z \int (W^{k+1}\w B)_{N,N-k-2}\phi +\int(W^{k+1})_{N,N-k-1}\w\phi.
$$
Thus  $\rho u^k=\dbar u^{k+1}$ for $k<q-1$ whereas $\rho\w u^{q-1}$
is a holomorphic $q$-cocycle as desired.

\smallskip
It remains to consider the case when we have different resolutions in $\Omega_j$.
For each pair $j,k$
choose a weight $g_{s_{jk}}$ with support in $\tilde\Omega_{jk}$ that is 
holomorphic in $z$ in $\Omega_{s_{jk}}=\Omega_{jk}$.
By \cite[Theorem~3 Ch.~6 Section~F]{GR}
we can choose a resolution in $\tilde\Omega_{s_{jk}}=\tilde\Omega_{jk}$ 
in which  both of the
resolutions in $\tilde\Omega_j$ and $\tilde\Omega_k$ restricted to $\Omega_{s_{jk}}$ 
are direct summands.
Let us fix metric and Hefer form and thus a current $a_{s_{jk}}=(HR)_{s_{jk}}$ 
in $\Omega_{s_{jk}}$
and thus a  solution $u_{s_{jk}}$ corresponding to $(HR)_{s_{jk}}\w g_{s_{jk}}$.
If we extend the metric and Hefer form from $\tilde\Omega_j$ in a way that respects
the direct sum, then $(HR)_j$ with these extended choices will be unaffected,
cf., \cite[Section~4]{AW1}. On $\tilde\Omega_{js_{jk}}$ we
therefore  practically speaking 
have  just one single resolution and as before thus  $u_j-u_s$
is holomorphic (if $q=1$) and $\dbar  u_{j s_{jk}}$ if $q>1$.
It follows that 
$u_j-u_k=u_j-u_s+u_s-u_k$ is holomorphic on $\Omega_{jk}$ if $q=1$ and equal to
$\dbar$ of 
$$
u_{jk}=u_{js_{jk}}+u_{s_{jk}k}
$$
if $q>1$.
We now claim that each $1$-cocycle 
\begin{equation}\label{bart}
u_{jk}+u_{kl}+u_{lj}
\end{equation}
is holomorphic on $\Omega_{jkl}$ if $q=2$ and
$\dbar$-exact on $\Omega_{jkl}\cap X_{reg}$ if $q>2$.
On $\tilde\Omega_{s_{jkl}}=\tilde\Omega_{jkl}$ we can choose a resolution in which
each of the resolutions associated with the indices $s_{jk}, s_{kl}$ and $s_{kj}$
are direct summands. It follows that 
$
u_{js_{jk}}+u_{s_{jk}s_{jkl}}+u_{s_{jkl}j}
$
is holomorphic if $q=2$ and $\dbar u_{j s_{jk}s_{jkl}}$ if $q>2$.
Summing up, the statement about \eqref{bart} follows.
If we continue in this way Claim~I follows.

\smallskip
It remains to prove Claim~II.
It is not too hard to check by an appropriate
induction procedure, cf., the very construction
of Hefer morphisms in \cite{A7}, that if we have two choices
of (systems of) Hefer forms $H_j$ and $H_k$ for the same
resolution $f$, then there is a form
$H_{jk}$ such that 
\begin{equation}\label{tarta1}
\delta_\eta H_{jk}=H_j-H_k+f(z)H_{jk}-H_{jk}f.
\end{equation}
More generally, if
$$
H^0=\sum H_j\w\epsilon_j
$$
then for each $k$ there is a  (holomorphic) $k$-cochain $H^k$ such that
(assuming $f(z)=0$ for simplicity)
\begin{equation}\label{bus1}
\delta_\eta H^k=\epsilon\w H^{k-1}-H^k f
\end{equation}
(the difference in sign between \eqref{tarta1} and \eqref{bus1} is because
in the latter one $f$ is to the right of the basis elements).

Elaborating the construction in \cite[Section~4]{AW1}, cf., 
\cite[Section~8]{A1},  one  finds, given $R^0 
=\sum R_j\w\epsilon_j$,  $k$-cochains of currents $R^k$ such that 
\begin{equation}\label{bus2}
\nabla_f R^{k+1}= \epsilon\w  R^k.
\end{equation}
(With the notation in \cite{AW1}, if $R_j=\dbar|F|^{2\lambda}\w u^j|_{\lambda=0}$,
then the coefficient for $\epsilon_j\w \epsilon_k\w \epsilon_\ell$ is 
$\dbar|F|^{2\lambda}\w u^ju^ku^\ell|_{\lambda=0}$, etc.)


We  define a product of forms in the following way.
If  the multiindices $I,J$ have no index in common, then
$(\epsilon_I,\epsilon_J)=0$, whereas
$$
(\epsilon_I\w\epsilon_\ell,\epsilon_\ell\w\epsilon_J)=\frac{|I|!|J|!}{(|I|+|J|+1)!}
\epsilon_{I}\w\epsilon_J.
$$
We then extend it to any forms bilinearly in the natural way.
It is easy to check that 
$$
(H^k f, R^\ell)=-(H^k, f R^\ell).
$$
Using  \eqref{bus1} and \eqref{bus2} (and keeping in mind that
$H^k$ and $R^\ell$ have odd order) one can verify that 
$$
\nabla_\eta(H^k,R^\ell)=(\epsilon\w H^{k-1}, R^\ell)
+(H^k, \epsilon\w R^\ell).
$$
By a similar argument one can  finally check  that
$$
A^k=\sum_{j=0}^k(H^j,R^{k-j})
$$
will satisfy \eqref{bus3}.

\smallskip
Since  $X$ is Cohen-Macaulay, each $R^k$ will be 
a smooth form times the principal term $(R_j)_{N-n}$ for $R_j$
corresponding to some choice of metric. The case with
two different metrics is described in \cite[Section~4]{AW1} and
the general case is similar;   compare also to the discussion preceding
Remark~\ref{buffel}.
Thus \eqref{zoran2} holds, and
thus  Claim~II holds, and so   Theorem~\ref{mainglobal}  is proved in case 
$X$ is a subvariety of $\Omega\subset\C^N$. 
\end{proof}

\begin{remark}\label{zoran3}
If $X$ is not Cohen-Macaulay, then we must assume explicitly that
$\dbar\chi_\delta\w \phi \w R^k\to 0$ for all $R^k$. 
\end{remark}

The extension to a general analytic space $X$ is done in pretty much the same
way and we just sketch the idea. First assume that we have
a fixed $\eta$ as before but two different choices $s$ and $\tilde s$ of
admissible form, and let $B$ and $\tilde B$ be the corresponding locally
integrable forms. Then, one can check, arguing as
in \cite[Section~5]{AS2}, that
\begin{equation}\label{bbprim}
\nabla_\eta (B\w\tilde B)=\tilde B-B
\end{equation}
in the current sense, 
and by a minor modification of Lemma~\ref{brum}
one can  check that 
$$
\int (HR\w g\w B\w \tilde B)_{N,N-2}\w\phi
$$
is smooth on $X_{reg}\cap\Omega'$; for degree reasons it  vanishes if $q=1$.
It follows from \eqref{bbprim} that 
$
\nabla_\eta(HR^\lambda\w g\w B\w \tilde B)=
HR^\lambda\w g\w \tilde B-
HR^\lambda\w g\w B
$
from which we can conclude that
\begin{multline}\label{hast}
\dbar_z\int (HR\w g\w B\w\tilde B)_{N,N-2}\w\phi=\\
\int(HR\w g\w B)_{N,N-1}\w\phi-\int(HR\w g\w \tilde B)_{N,N-1}\w\phi,
\quad  z\in \Omega'\cap X_{reg}.
\end{multline}

Now  let us assume  that
we have two local solutions,  in say $\Omega$ and $\Omega'$,  obtained from
two different embeddings of slightly larger sets $\tilde \Omega$ and $\tilde\Omega'$
in subsets of $\C^{N}$ and $\C^{N'}$, respectively. We want to compare these
solutions on $\Omega\cap\Omega'$. 
Localizing further, as before, we may assume that the weights 
both have support in $\tilde \Omega\cap\tilde \Omega'$.  After adding nonsense
variables  we may assume that both embeddings are into the same
$\C^N$, and after further localization there is a local
biholomorphism in $\C^N$ that maps one embedding onto the other one,
see \cite{GR}.  
(Notice that a solution  obtained via an embedding in $\C^{N_1}$
also can be obtained via an embedding into a larger $\C^N$, by just adding
dummy variables in the first formula.)
In other words, we may assume that we have the same
embedding in some open set $\Omega\subset\C^N$ but two solutions obtained  from
different  $\eta$ and $\eta'$. 
(Arguing as before, however, we may assume that we have the same resolution
and the same residue current $R$.)
Locally there is an  invertible matrix $h_{jk}$ such that
\begin{equation}\label{arvika}
\eta'_j=\sum h_{jk}\eta_k.
\end{equation}
We define a vector bundle mapping
$
\alpha^*\colon \Lambda_{\eta'}\to\Lambda_\eta
$
as  the identity on $T^*_{0,*}(\Omega\times\Omega)$
and so that  
$$
\alpha^* d\eta_j'=\sum h_{jk} d\eta_k.
$$
It is readily checked that
$$
\nabla_\eta\alpha^*= \alpha^*\nabla_{\eta'}.
$$
Therefore, $\alpha^*g'$ is an $\eta$-weight if $g'$ is an $\eta'$-weight.
Moreover,  if $H$ is an $\eta'$-Hefer morphism, then $\alpha^*H$
is an $\eta$-Hefer morphism, cf., \eqref{Hdef}. 
If $B'$ is obtained from an $\eta'$ admissible form $s'$, then
$\alpha^*s'$ is an $\eta$-admissible form and
$\alpha^*B'$ is the corresponding locally integrable form.
We  claim  that the $\eta'$-solution
\begin{equation}\label{prall}
v'=\int (H'R\w g'\w B')_{N,N-1}\w\phi
\end{equation}
is comparable to the $\eta$-solution
\begin{equation}\label{prall2}
v=\int \alpha^*(H'R)\w\alpha^* g'\w \alpha^* B'\w\phi.
\end{equation}
Notice that we are only interested in the $d\zeta$-component of the
kernels.
We have that ($d\eta=d\eta_1\w\ldots\w d\eta_N$ etc)
$$
(H'R\w g'\w B')_{N,N-1}=A\w d\eta' \sim
A\w\det(\partial\eta'/\partial\zeta) d\zeta
$$
and 
$$
\alpha^*(H'R\w g'\w B')_{N,N-1}=A\w\det h\w d\eta
\sim A\w\det h \det (\partial\eta/\partial\zeta) d\zeta.
$$ 
Thus
$$
\alpha^*(H'R\w g'\w B')_{N,N-1}\sim \gamma(\zeta,z)(H'R\w g'\w B')_{N,N-1}
$$
with
$$
\gamma=\det h \det\frac{\partial\eta}{\partial\zeta}
\Big(\det\frac{\partial\eta'}{\partial\zeta}\Big)^{-1}.
$$
From  \eqref{arvika} we have that
$\partial\eta'_j/\partial\zeta_\ell=
\sum_k h_{jk}\partial\eta_k/\partial\zeta_{\ell} +\Ok(|\eta|)
$
which implies that $\gamma$ is $1$ on the diagonal.
Thus  $\gamma$ is a smooth (holomorphic) weight and therefore 
\eqref{prall} and \eqref{prall2} are comparable, and thus the claim
is proved. This proves Theorem~\ref{mainglobal} in the case $q=1$, and
elaborating the idea  as in the
previous proof we obtain the general case.

\begin{remark}
In case $X$ is a Stein space and $X_{sing}$ is discrete there is a much simpler proof
of Theorem~\ref{mainglobal}. 
To begin with we can solve $\dbar v=\phi$ locally, and 
modifying by such local solutions we may assume that
$\phi$ is vanishing identically in a \nbh of $X_{sing}$.
There exists  a sequence of holomorphically convex open subsets
$X_j$ such that $X_j$ is relatively compact in $X_{j+1}$ and
$X_j$ can be embedded as a subvariety of some pseudoconvex set
$\Omega_j$ in $\C^{N_j}$. Let $K_\ell$ be the closure of $X_\ell$.
By Theorem~\ref{main} we can solve $\dbar u_\ell =\phi$ in a \nbh of 
$K_\ell$ and  $u_\ell$ will be smooth. 
If $q>1$ we can thus  solve $\dbar w_\ell=u_{\ell+1}-u_\ell$
in a \nbh of $K_\ell$, and since $X_{sing}$ is discrete we can 
assume that $\dbar w_\ell$ is smooth in $X$.  Then
$v_\ell=u_\ell-\sum_1^{\ell-1}\dbar w_k$ defines a global solution.
If $q=1$, then  one  obtains a global solution in a similar way
by  a Mittag-Leffler type argument. 
\end{remark}

\def\listing#1#2#3{{\sc #1}:\ {\it #2},\ #3.}

\end{document}